\setlist[enumerate]{itemsep=3pt,topsep=3pt}
\setlist[enumerate,1]{label=\textup{(\roman*)}}
\setlist[enumerate,2]{label=\textup{(\alph*)}}
\newtheorem{lemma}{Lemma}[section]
\newtheorem{theorem}[lemma]{Theorem}
\newtheorem{corollary}[lemma]{Corollary}
\newtheorem{maintheorem}{Theorem}
\setlist[enumerate]{topsep=0pt}
\newcommand{\C}{\mathbb{C}}
\renewcommand{\O}{\mathbb{O}}
\renewcommand{\P}{\mathbb{P}}
\newcommand{\Q}{\mathbb{Q}}
\newcommand{\R}{\mathbb{R}}
\newcommand{\Z}{\mathbb{Z}}
\newcommand{\CP}{\C\P}
\newcommand{\eps}{\epsilon}
\renewcommand{\o}{\otimes}
\newcommand{\Gr}{\mathrm{Gr}}
\newcommand{\Out}{\mathrm{Out}}
\newcommand{\Spin}{\mathrm{Spin}}
\newcommand{\SO}{\mathrm{SO}}
\newcommand{\Th}{\mathrm{Th}}
\newcommand{\from}{\leftarrow}
\title{On the $32$-dimensional Rosenfeld projective plane}
\author{John Jones, Dmitriy Rumynin, Adam Thomas}
\begin{document}

\begin{abstract}
Following on from \cite{JRTI}, we make a detailed study of the $32$-dimensional Rosenfeld projective plane which is the symmetric space EIII in Cartan's list of compact symmetric spaces. 
\end{abstract}

\maketitle

\section*{Introduction} 
In \cite{JRTI} we made a systematic study of the classical topological invariants of homogeneous spaces with a particular emphasis on the twelve compact symmetric spaces for the exceptional Lie groups.  Our main examples in that paper are the three Rosenfeld projective planes of dimensions $32, 64$ and $128$.  In this paper we make a much more detailed study of the $32$-dimensional Rosenfeld projective plane.  

Throughout we will use the notation $R$ for the $32$-dimensional Rosenfeld projective plane. Explicitly,
$$
R = \frac{E_6}{\Spin(10)\times_{C_4} S^1},
$$
which is a symmetric space for the exceptional Lie group $E_6$.  It is called $R_5$ in \cite{JRTI}, EIII in Cartan's list of compact symmetric spaces, and also known as $P^2(\O \otimes \C)$. The subgroup $C_4$ is the cyclic subgroup of $\Spin(10) \times S^1$ generated by $(\eps,i) \in \Spin(10) \times S^1$. Here $\eps$ is the element of the centre of $\Spin(10)$ which acts as multiplication by $i$ on $\delta_{10}^+$ and, therefore, multiplication by $-i$ on $\delta_{10}^-$.  As usual, $\delta_{10}^{\pm}$ are the two $16$-dimensional complex spin representations of $\Spin(10)$. 

We also use the following notation 
$$
P = \frac{F_4}{\Spin(9)}, \quad S = \frac{F_4}{\Spin(7)}, \quad Q = \frac{F_4}{\Spin(7) \times_{C_2} \Spin(2)}.
$$
These three homogeneous spaces for $F_4$ have dimensions  $16, 31, 30$ respectively, and all three are submanifolds of $R$.  

We study $R$ by using the action of $F_4 \subset E_6$ on $R$. The idea for this approach comes from Atiyah and Berndt \cite{MR2039984}.  The action of $F_4$ on $R$ has three orbit types. The principal (generic) orbit of codimension $1$ is $S$ and there are $2$ special orbits $P$ and $Q$.  This orbit structure  is described in the Appendix to \cite{MR2039984} and the paper \cite{LANDSBERG2001477}. It can also be derived from \cite[Chapter~14]{MR1428422}. In standard terminology, the action of $F_4$ on $R$ has cohomogeneity $1$, and the orbit space $R/F_4$ is a closed interval.

Using Mostert's cohomogeneity one theorem \cite{MR0085460,MR0095897,MR3778980,bredon1972introduction} we get the following result. 

\begin{maintheorem} \label{mainthm:Mostert} Let $N_P$, $N_Q$ be the normal bundles of $P$ and $Q$ in $R$.  Let $D_P$, $D_Q$ be the disc bundles in $N_P$, $N_Q$ and $S_P$, $S_Q$ be the corresponding sphere bundles.  There are diffeomorphisms 
$$
\begin{CD}
S_P @<{e_1}<< S  @>{e_2}>> S_Q
\end{CD}
$$
such that
$$
R = D_P \cup_{S} D_Q.
$$
\end{maintheorem}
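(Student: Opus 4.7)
The plan is a direct application of Mostert's cohomogeneity one theorem. The hypotheses needed are already in place from the references collected in the introduction: the $F_4$-action on $R$ is cohomogeneity one with principal orbit $S = F_4/\Spin(7)$ and two singular orbits $P$ and $Q$. I would first verify that the one-dimensional orbit space $R/F_4$ is a closed interval rather than a circle; simple connectedness of $R$ (it is a compact simply connected symmetric space of type EIII) rules out $S^1$, so $R/F_4 \cong [0,1]$, with the two endpoints corresponding to the images of $P$ and $Q$.

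Next, the equivariant slice theorem provides $F_4$-invariant closed tubular neighborhoods of $P$ and $Q$ diffeomorphic to the disc bundles $D_P$ and $D_Q$ of $N_P$ and $N_Q$. The complement of their interiors is the preimage in $R$ of a closed subinterval $[a,b] \subset R/F_4$ with $0 < a < b < 1$. Since every orbit over the interior of the orbit space is principal and equivariantly diffeomorphic to $S$, a geodesic arc orthogonal to the orbits identifies this complement $F_4$-equivariantly with $S \times [a,b]$. Absorbing this cylinder into the two tubular neighborhoods yields the gluing $R = D_P \cup D_Q$ along a common boundary identified with $S$.

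The diffeomorphisms $e_1 \colon S \to S_P$ and $e_2 \colon S \to S_Q$ arise from the observation that each boundary sphere bundle is a single $F_4$-orbit whose principal isotropy is conjugate to $\Spin(7)$, hence equivariantly diffeomorphic to $S$. Concretely, $\Spin(9)$ acts on fibers of $N_P \simeq \R^{16}$ through its spin representation with unit sphere $S^{15} = \Spin(9)/\Spin(7)$, while $\Spin(7)\times_{C_2}\Spin(2)$ acts on fibers of $N_Q \simeq \R^2$ through the $\Spin(2)$-factor with unit circle $S^1 = \bigl(\Spin(7)\times_{C_2}\Spin(2)\bigr)/\Spin(7)$. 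In both cases the generic stabilizer on the normal unit sphere matches the principal stabilizer $\Spin(7)$, which is precisely the compatibility needed to realise $S_P$ and $S_Q$ as copies of $S = F_4/\Spin(7)$. The only real obstacle is this bookkeeping of stabilizer inclusions; everything geometric is contained in the cohomogeneity one machinery and the slice theorem.
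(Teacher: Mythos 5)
Your proposal is correct and follows essentially the same route as the paper, which likewise obtains the decomposition by citing Mostert's cohomogeneity one theorem together with the known orbit structure (principal orbit $S$, singular orbits $P$ and $Q$, orbit space a closed interval). The stabilizer bookkeeping you carry out for the normal spheres --- $S^{15} = \Spin(9)/\Spin(7)$ via the spin representation and $S^1 = \bigl(\Spin(7)\times_{C_2}\Spin(2)\bigr)/\Spin(7)$ via the $\Spin(2)$-factor --- matches what the paper records in Section~\ref{sec:triality} when identifying $F_4/\Spin(7)$ with $S(\Delta_9)$ and $V_2(U_9)$.
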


An equivalent way to describe $R$ is as the double mapping cylinder of the maps $P \from S \to Q$ given by the projections in the sphere bundles $S_P$ and $S_Q$.

The $16$-dimensional real vector bundle $N_P$ is the bundle over $P = F_4/\Spin(9)$ associated to the $16$-dimensional real spin representation of $\Spin(9)$.  This is also isomorphic to the tangent bundle of $P$.  The $2$-dimensional real bundle $N_Q$ over $Q = F_4/(\Spin(7) \times_{C_2} \Spin(2))$ is associated to the obvious representation $\Spin(7)\times_{C_2} \Spin(2) \to \SO(2)$. 

This geometric decomposition of $R$ allows us to give a new proof of a theorem originally proved by Toda and Watanabe \cite{MR0358847}.

\begin{maintheorem} \label{mainthm:intcoh}
The integral cohomology ring of $R$ is 
\[
H^*(R) = \frac{\Z[t,w]}{(r_{18}, r_{24})}
\]
where $t \in H^2(R)$, $w \in H^8(R)$ and the relations are
$$
r_{18} = t^9 - 3w^2t, \quad r_{24} = w^3 - 9wt^8 + 15 w^2t^4.
$$
\end{maintheorem}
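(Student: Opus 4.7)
The plan is to exploit the decomposition $R = D_P \cup_S D_Q$ from Main Theorem~\ref{mainthm:Mostert} via a Mayer--Vietoris argument: compute $H^*(P)$, $H^*(Q)$, and $H^*(S)$, glue them to obtain $H^*(R;\Z)$ as a graded abelian group, and then identify the ring structure.

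For the three pieces, $P = \OP^2$ has the Cayley plane cohomology $H^*(P;\Z) = \Z[u]/(u^3)$ with $|u|=8$. Since $N_P = TP$ and $\chi(P)=3$, its Euler class is $e(N_P) = 3u^2$, and the Gysin sequence of the $S^{15}$-bundle $S \to P$ delivers $H^*(S;\Z)$ with values $\Z, \Z, \Z/3, \Z, \Z$ in degrees $0, 8, 16, 23, 31$. For $Q$, the natural projection $Q \to P$ has fibre $\Spin(9)/(\Spin(7)\times_{C_2}\Spin(2)) \cong \widetilde{\Gr}_2(\R^9)$, which is diffeomorphic to the complex $7$-dimensional quadric; this fibre has torsion-free cohomology of total rank $8$ concentrated in even degrees. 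The Serre spectral sequence therefore collapses at $E_2$ and determines $H^*(Q;\Z)$ as a module over $H^*(P;\Z)$; its multiplicative structure is then pinned down by lifting the fibre generators and tracking the resulting deviation coefficients against the characteristic classes of the fibration.

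Feeding these into the Mayer--Vietoris sequence
\[
\cdots \to H^n(R) \to H^n(P) \oplus H^n(Q) \to H^n(S) \to H^{n+1}(R) \to \cdots
\]
yields $H^*(R;\Z)$ as a graded abelian group; the middle map is the difference of the pullbacks along $S \to P$ and $S \to Q$, and is controlled by the Gysin sequences just used. Since $R$ is a compact Hermitian symmetric space, it admits a CW decomposition with only even-dimensional cells, so $H^*(R;\Z)$ is torsion-free, and the resulting Poincar\'e series matches $\frac{(1-q^{18})(1-q^{24})}{(1-q^2)(1-q^8)}$. I pick the K\"ahler class $t \in H^2(R) = \Z$ as one generator and a class $w \in H^8(R)$ whose restriction to $P$ generates $H^8(P)$ as the other, so that $\Z[t,w] \to H^*(R)$ is surjective with kernel a complete intersection ideal generated in degrees $18$ and $24$.

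The hard part will be extracting the specific integer coefficients $3$, $9$, and $15$ in the relations $r_{18}$ and $r_{24}$. These coefficients encode the intersection-theoretic geometry of the closed orbits $P, Q \subset R$: one computes the Gysin pushforwards $H^*(P) \to H^*(R)$ and $H^*(Q) \to H^*(R)$ using the normal bundles $N_P$ and $N_Q$, and matches the resulting classes against cup products of $t$ and $w$. The factor $3$ is already visible at the Euler class stage from $\chi(P)=3$; the coefficients $9$ and $15$ should emerge from analogous characteristic-number calculations on $Q$ combined with the explicit Mayer--Vietoris gluing data, and keeping track of signs and orientations throughout will be the main source of delicacy.
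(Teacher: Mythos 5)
Your overall strategy is the one the paper follows: use the cohomogeneity-one decomposition of Theorem~\ref{mainthm:Mostert}, compute $H^*(Q)$ via the quadric bundle $q:Q\to P$ and Leray--Hirsch, and transfer the answer to $R$ with Gysin/umkehr maps (the paper uses the pair sequences for $(R,P)$ and $(R,Q)$ together with the Thom isomorphisms for $N_Q$ and $N_P$, which carry the same information as your Mayer--Vietoris sequence). However, the two places where the actual integers come from are exactly the places your plan is vague, and one of them would fail as described.

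First, the ring structure of $H^*(Q)$. ``Tracking the deviation coefficients against the characteristic classes of the fibration'' is not yet an argument. The paper's mechanism is the bundle identity $U_7\oplus U_2=q^*(U_9)$ over $Q$: since $U_7$ has rank $7$, the vanishing of $w_8(U_7)$ forces $2v=s^4+a$ (which is also what pins down the integral generator $v$, hence $w$), and the vanishing of $p_4(U_7)$, combined with $p(U_9)=1-6a-3a^2$ on the Cayley plane, forces $s^8=3v^2$. This is where the coefficient $3$ in $r_{18}$ and the numbers $15$, $26$ in the top-degree products originate. Relatedly, your normalisation of $w$ (``restriction to $P$ generates $H^8(P)$'') only determines $w$ modulo $\Z t^4$, because $i^*(t^4)=0$; the stated relations hold for exactly one representative of that coset, namely the one with $j^*(w)=v$ and $2v=s^4+a$, so you must fix $w$ on $Q$, not on $P$. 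Second, and more seriously, the coefficient $9$ (equivalently $\langle w^4,[R]\rangle=9$) cannot ``emerge from characteristic-number calculations on $Q$'': every class in the image of $j_!$ satisfies $j_!(j^*(x))=xt$ and is therefore divisible by $t$, so no computation pulled back from $Q$ sees $w^4$ directly. The missing idea is Poincar\'e duality on the middle cohomology: $H^{16}(R)$ is free of rank $3$ with basis $t^8, t^4w, w^2$, the first two rows of its intersection matrix are determined by $Q$, and unimodularity forces the determinant $3x-26$ to be $\pm1$, whence $x=9$. Without this step your proposal does not determine $r_{24}$.
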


There is an equivalent way to present the cohomology of $R$ using Poincar\'e duality.  Choosing a generator (fundamental class) $[R]$ in $H_{32}(R) \cong \Z$ yields a non-degenerate bilinear pairing
$$
\mu : H^p(R) \o H^{32-p}(R) \to \Z, \qquad \mu(a,b) = \langle ab, [R] \rangle.
$$
This tells us that if we know all products that end up in the top degree, then we know all products. The following table, proved in Theorem \ref{thm:prodsR}, gives all products to the top degree.

\begin{center}
\begin{tabular}{ | c | c | c | c | c | }
\hline
$t^{16}$ & $t^{12}w$ & $t^8w^2$ & $t^4w^3$ & $w^4$\\
\hline
$3 \cdot 26$ & $3 \cdot 15$ & $26$ & $15$ & $9$ \\
\hline
\end{tabular}
\end{center}

Recall that $R$ is a $16$-dimensional smooth complex subvariety of $\CP^{26}$, often called the fourth Severi variety \cite{MR0773432}. It is also a generalised flag variety, see \cite[\S1b]{Lazar}.  We write $T_c(R)$  for the 16-dimensional complex tangent bundle of $R$.  As an application, we explain how to calculate the Chern classes of $T_c(R)$. 

\begin{maintheorem} \label{mainthm:chernclasses}
The Chern classes of $T_c(R)$ are as follows. 
\end{maintheorem}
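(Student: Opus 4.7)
The plan is to apply the Borel--Hirzebruch splitting principle to $T_c R$, viewed as an $E_6$-equivariant complex vector bundle on $R = E_6/L$ with $L = \Spin(10) \times_{C_4} S^1$. First I identify the associated representation: the complexified isotropy representation of $L$ on $\mathfrak{e}_6/\mathfrak{l}$ splits as $V \oplus \overline V$, and the decomposition $\mathfrak{e}_6^{\C} = \frakso(10,\C) \oplus \C \oplus \delta_{10}^+ \oplus \overline{\delta_{10}^+}$ identifies $V \iso \delta_{10}^+$ as $\Spin(10)$-representations, with a single $S^1$-weight which I normalise so that the associated line bundle has first Chern class $t$.

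Next I fix a maximal torus $T \subset L$ (which has rank $6 = \rank E_6$, so is also maximal in $E_6$) and consider the fibration $\pi\colon E_6/T \to R$. By the splitting principle, $\pi^* T_c R = \bigoplus_\alpha L_\alpha$ where $\alpha$ runs over the sixteen weights of $V$, which in coordinates are $\tau + \tfrac12(\pm\epsilon_1 \pm \cdots \pm\epsilon_5)$ with $\tau$ the $S^1$-weight lifting $t$, $\epsilon_1, \dots, \epsilon_5$ an orthonormal weight basis for $\Spin(10)$, and an even number of minus signs. Since both $R$ and $E_6/T$ are torsion-free complex flag varieties, $\pi^*\colon H^*(R;\Z) \hookrightarrow H^*(E_6/T;\Z)$ is injective with image the $W_L$-invariants, so each $c_i(T_c R)$ is the unique $W_L$-invariant lifting the elementary symmetric polynomial $e_i$ in these sixteen weights.

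Finally, expanding $\prod_\alpha(1+\alpha)$ and collecting by powers of $\tau$ writes each $c_i$ as a polynomial in $\tau$ and in the $W_{\Spin(10)}$-invariant symmetric functions $\sigma_k := e_k(\epsilon_j^2)$. These $\sigma_k$ lift uniquely to classes in $H^{4k}(R;\Q)$; in particular $\sigma_1$ is a rational multiple of $w$ whose normalisation is fixed either by matching one product value from the table in Theorem \ref{mainthm:intcoh} or by the Euler number $c_{16}(T_c R)[R] = \chi(R) = 27$. Expressing the higher $\sigma_k$ in terms of $t$ and $w$ modulo $r_{18}$ and $r_{24}$ then yields $c_1, \dots, c_{16}$.

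The main obstacle is entirely computational: one must simultaneously fix the integer normalisation of $t$ and $w$, expand elementary symmetric polynomials of degree up to $16$ in sixteen variables, and reduce modulo the non-monomial ideal $(r_{18}, r_{24})$ in each cohomological degree up to $32$. We carry out this bookkeeping with a short \Magma{} script whose output is the table of Chern classes in the statement.
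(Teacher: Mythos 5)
Your strategy is essentially the one the paper follows: identify $T_c(R)$ with the bundle associated to the isotropy representation $\delta_{10}^+\o\xi^3$ of $\Spin(10)\times_{C_4}S^1$, compute its Chern classes by the splitting principle in the Borel picture $H^*(BT)^{W_L}\to H^*(R;\Q)$, and then rewrite the answer in the generators $t,w$ of Theorem \ref{mainthm:intcoh} (the paper routes this last step through the generators $a_2,a_8$ of the presentation in \cite{JRTI} and Theorem \ref{thm:prescompare}). Relegating the final bookkeeping to a machine computation is also how the paper proceeds.

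There is, however, one concrete slip that would make your table come out wrong: the $S^1$-weight $\tau$ on $V\iso\delta_{10}^+$ is not yours to normalise. The representation $\delta_{10}^+\o\xi^k$ descends to $\Spin(10)\times_{C_4}S^1$ only when $(\eps,i)$ acts trivially, i.e.\ $k\equiv 3\pmod 4$, and the isotropy representation is $\delta_{10}^+\o\xi^{3}$; the character $\xi$ itself does not define a line bundle on $R$ (only $\xi^4$ does, giving $L$ with $c_1(L)=t$). Consequently $\tau=\tfrac34\,t$ in $H^2(E_6/T;\Q)$, not $t$. With your normalisation the very first entry already fails: the sixteen weights of $\delta_{10}^+$ sum to zero, so $c_1(T_c(R))=16\tau$, which must equal $12t$, forcing $\tau=\tfrac34\,t$. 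Your fallback of pinning normalisations via $c_{16}(T_c(R))[R]=\chi(R)=27$ or the product table would detect the inconsistency, but as written the proof fixes $\tau$ incorrectly; once $\tau=\tfrac34\,t$ is used (and $\sigma_1$ is calibrated against $w$ as you describe), the computation does produce the stated table.
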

\vspace{-0.9cm}
\begin{center}
  \begin{minipage}[t]{.45\textwidth}
\begin{align*}  
c_1 & = 12 t, \\
c_2 & = 69 t^2, \\
c_3 & = 252 t^3, \\
c_4 & = 657 t^4 - 6 w, \\
c_5 & =    1296 t^5 - 36 t w, \\
c_6 & =    1995 t^6 - 102 t^2 w, \\
c_7 & =    2448 t^7 - 198 t^3 w, \\
c_8 & =    2412 t^8 - 288 t^4 w + 39 w^2,
\end{align*}
  \end{minipage}
  \quad
  \begin{minipage}[t]{.45\textwidth}   
\begin{align*}
c_9 & =  -270 t^5 w + 5760 t w^2, \\
c_{10} & =  -180 t^6 w + 3645 t^2 w^2, \\
c_{11} & =    -432 t^7 w + 2430 t^3 w^2, \\
c_{12} & =    750 t^4 w^2 - 136 w^3, \\
c_{13} & =    360 t w^3, \\
c_{14} & =    84 t^2 w^3, \\ 
c_{15} & = 1512 t^3 w^3 - 864 t^7 w^2 \\
c_{16} & =    3 w^4.
\end{align*}
  \end{minipage}
\end{center}

In Section \ref{sec:triality} we explain the role that the triality automorphism of $\Spin(8)$ plays in this study of $R$. Section~\ref{sec:HQ} is devoted to the calculation of $H^*(Q)$. In Section~\ref{sec:HR} we give a proof of Theorem~\ref{mainthm:intcoh}. Finally, in Section~\ref{sec:chern} we turn to the calculation of the characteristic classes of vector bundles over $R$, proving Theorem~\ref{mainthm:chernclasses}.

\subsection*{Acknowledgments}
This research was funded in part by the EPSRC, EP/W000466/1 (Thomas). For the purpose of open access, the author has applied a Creative Commons Attribution (CC BY) licence to any Author Accepted Manuscript version arising from this submission.


\section{Triality and homogeneous spaces of $F_4$} \label{sec:triality}
One of the special features of $\Spin(8)$ is triality. Recall that $\Spin(8)$ has three $8$-dimensional real representations, the vector representation $u_8$ and the two spin representations $\delta^{\pm}_8$.  No two of these representations are isomorphic but  given any two there is an outer automorphism of $\Spin(8)$ which transforms one to the other. Indeed the group $\Out(\Spin(8))$ of outer automorphisms of $\Spin(8)$ can be identified with $\Sigma_3$, the group of permutations of the set $\{u_8, \delta_8^+, \delta_8^-\}$. 

The representation $u_8$ gives a transitive action of $\Spin(8)$ on $S^7$ with stabiliser $\Spin(7)$. By triality, the same is true for $\delta_8^{+}$ and $\delta^-_8$.  So we get three (conjugacy classes) of embeddings 
\[
i, j_+, j_- : \Spin(7) \to \Spin(8),
\]
and each of the homogeneous spaces 
\[ \frac{\Spin(8)}{i(\Spin(7))}, \quad \frac{\Spin(8)}{j_+(\Spin(7))}, \quad \frac{\Spin(8)}{j_-(\Spin(7))} \] 
are $\Spin(8)$-equivariariantly diffeomorphic to $S^7$.  

Now we have the usual embeddings
$$
\Spin(8) \to \Spin(9) \to F_4.
$$
The embeddings $j_+$, $j_-$ are conjugate in $\Spin(9)$ and we get two conjugacy classes of embeddings
$$
i, j : \Spin(7) \to \Spin(9).
$$
The embeddings $i, j$ are conjugate in $F_4$, see \cite{AdamsTrialityallthat}.

First we identify the homogeneous spaces
$$
\frac{\Spin(9)}{i(\Spin(7))}, \quad \frac{\Spin(9)}{j(\Spin(7))}.
$$

\begin{lemma}
\begin{enumerate}
\item
The homogeneous space $\Spin(9)/i(\Spin(7))$ is $\Spin(9)$-equivariantly diffeomorphic to the Stiefel manifold $V_2(\R^9)$ where $\Spin(9)$ acts on $V_2(\R^9)$ via the vector representation.
\item
The homogeneous space $\Spin(9)/j(\Spin(7))$ is $\Spin(9)$-equivariantly diffeomorphic to $S^{15}$ where $\Spin(9)$ acts on $S^{15}$ as the sphere in the $16$-dimensional real spin representation of $\Spin(9)$.
\end{enumerate}
\end{lemma}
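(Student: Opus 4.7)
The plan is to let $\Spin(9)$ act on $V_2(\R^9)$ via the covering $\pi\colon \Spin(9) \to \SO(9)$ and the standard $\SO(9)$-action, then take the base point $p = (e_1, e_2)$. Transitivity of $\SO(9)$ on $V_2(\R^9)$ is classical (Gram--Schmidt), so $\Spin(9)$ also acts transitively. The stabilizer of $p$ in $\SO(9)$ is the $\SO(7)$ acting on the orthogonal complement $\{e_1,e_2\}^\perp$, so the stabilizer in $\Spin(9)$ is $\pi^{-1}(\SO(7))$. Now $i(\Spin(7)) \subset \Spin(9)$ surjects onto this $\SO(7)$ as the usual double cover $\Spin(7) \to \SO(7)$, whose kernel $\{\pm 1\}$ coincides with $\ker \pi$; hence $i(\Spin(7))$ already realizes the full preimage, and we get $\pi^{-1}(\SO(7)) = i(\Spin(7))$, giving the desired $\Spin(9)$-equivariant diffeomorphism $\Spin(9)/i(\Spin(7)) \cong V_2(\R^9)$.

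\textbf{Part (ii).} Let $\Delta_9$ denote the $16$-dimensional real spin representation of $\Spin(9)$. Its restriction to $\Spin(8) \subset \Spin(9)$ decomposes as $\delta_8^+ \oplus \delta_8^-$. The embedding $j_+\colon \Spin(7) \hookrightarrow \Spin(8)$ is by construction (the triality twist of $i$) the stabilizer of a unit vector $v \in \delta_8^+$, so its composite into $\Spin(9)$ fixes $v$ viewed inside $\Delta_9$, providing a distinguished point on $S^{15} \subset \Delta_9$. Transitivity of $\Spin(9)$ on $S^{15}$ is the classical statement that this linear action realizes the octonionic Hopf fibration $S^{15} \to S^8 = \Spin(9)/\Spin(8)$, whose $S^7$ fibers are moved transitively by $\Spin(8)$ through its action on $\delta_8^+$. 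Writing $H$ for the stabilizer of $v$, transitivity yields $S^{15} = \Spin(9)/H$, whence $\dim H = 36 - 15 = 21 = \dim j(\Spin(7))$; combined with $j(\Spin(7)) \subset H$, this identifies $j(\Spin(7))$ with the identity component of $H$. Finally, the long exact sequence of the fibration $H \to \Spin(9) \to S^{15}$, together with $\pi_1(S^{15}) = 0$ and the connectedness of $\Spin(9)$, forces $\pi_0(H) = 0$, so $H = j(\Spin(7))$.

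\textbf{Main obstacle.} Part (i) is essentially a calibration exercise, amounting to keeping track of the kernel of the spin covering. The genuine content in (ii) is ensuring that the stabilizer is $j(\Spin(7))$ rather than $i(\Spin(7))$: this depends on the triality twist, encoded in the branching rule $\Delta_9|_{\Spin(8)} = \delta_8^+ \oplus \delta_8^-$ together with the defining property that $j_\pm$ fixes a unit vector in $\delta_8^\pm$. Transitivity of $\Spin(9)$ on $S^{15}$ can be taken as a black box (the octonionic Hopf fibration) or established by the tangent-space calculation $\frakso(9) \cdot v = T_v S^{15}$, using that the Clifford multiplication $u_8 \otimes \delta_8^+ \to \delta_8^-$ is surjective for nonzero $v$.
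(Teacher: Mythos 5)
Your proof is correct and follows essentially the same route as the paper: identify $\Spin(9)/i(\Spin(7))$ with $V_2(\R^9)$ via the vector representation, and $\Spin(9)/j(\Spin(7))$ with the unit sphere in the $16$-dimensional spin representation. The only difference is that where the paper simply cites Bryant for transitivity on $S^{15}$ and the identification of the stabiliser with $j(\Spin(7))$, you supply the argument yourself (branching $\Delta_9|_{\Spin(8)} = \delta_8^+ \oplus \delta_8^-$, the defining property of $j_+$, a dimension count, and connectedness of the stabiliser from the long exact sequence), all of which is sound.
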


\begin{proof}
The embedding $i$ is conjugate to the usual embedding of $\Spin(7)$ in $\Spin(9)$, and so the homogeneous space $\Spin(9)/i(\Spin(7))$ is the Stiefel manifold $V_2(\R^9)$ of $2$-frames in $\R^9$.  The spin representation of $\Spin(9)$ is a $16$-dimensional real representation and $\Spin(9)$ acts transitively on $S^{15}$, the sphere in the spin representation.  This action is transitive and the stabiliser of a point is (conjugate to)  $j(\Spin(7))$, see \cite{Bryantspinor}.
\end{proof}

Next we identify the homogeneous spaces 
$$
\frac{F_4}{i(\Spin(7))}, \quad \frac{F_4}{j(\Spin(7))}.
$$
The first is the total space of the fibre bundle
$$
\frac{\Spin(9)}{i(\Spin(7))} \to \frac{F_4}{i(\Spin(7))} \to \frac{F_4}{\Spin(9)}.
$$
The fibre of this bundle is the Stiefel manifold $V_2(\R^9)$.  Let $U_9$ be the real $9$-dimensional real vector bundle over $F_4/\Spin(9)$ associated to the $9$-dimensional vector representation of $\Spin(9)$. It follows from the previous lemma that $F_4/i(\Spin(7))$ is $F_4$-equivariantly diffeomorphic to the fibrewise Stiefel manifold $V_2(U_9)$, that is the fibre bundle over $P$ with fibre over $x \in P$ equal to $V_2(U_{9, x})$, where $U_{9, x}$ is the fibre of $U_9$ over $x$.

The second is the total space of the fibre bundle
$$
\frac{\Spin(9)}{j(\Spin(7))} .\to \frac{F_4}{j(\Spin(7))} \to \frac{F_4}{\Spin(9)}.
$$
Let $\Delta_{9}$ be the $16$-dimensional real vector bundle over $P = F_4/\Spin(9)$ associated to the (real) spin representation of $\Spin(9)$. This time the previous lemma tells us that $F_4/j(\Spin(7))$ is $F_4$-equivariantly diffeomorphic to $S(\Delta_{9})$, the sphere bundle of $\Delta_{9}$.  

As mentioned above, the embeddings $i, j : \Spin(7) \to F_4$ are conjugate. This proves the following theorem.

\begin{theorem}
There are $F_4$-equivariant diffeomorphisms
\[
S(\Delta_{9}) \xleftarrow{\hphantom{he} e_1 \hphantom{he}   } \frac{F_4}{\Spin(7)} \xrightarrow{\hphantom{he} e_2 \hphantom{he}   }  V_2(U_9)
\]
\end{theorem}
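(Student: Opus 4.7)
My plan is to deduce this directly from the identifications of $F_4/i(\Spin(7))$ and $F_4/j(\Spin(7))$ obtained in the discussion preceding the theorem, combined with the fact (recalled just above the theorem) that $i$ and $j$ are conjugate in $F_4$.

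First I would record the elementary observation that if $G$ is a Lie group and $H_1, H_2 \subset G$ are closed subgroups with $H_2 = g H_1 g^{-1}$ for some $g \in G$, then right translation by $g^{-1}$ descends to a $G$-equivariant diffeomorphism
$$
\Phi_g : G/H_1 \xrightarrow{\;\sim\;} G/H_2, \qquad xH_1 \mapsto xg^{-1}H_2.
$$
Taking $G = F_4$ and $H_1 = i(\Spin(7))$, $H_2 = j(\Spin(7))$, I obtain an $F_4$-equivariant diffeomorphism $\Phi : F_4/i(\Spin(7)) \xrightarrow{\sim} F_4/j(\Spin(7))$.

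Then I would write $F_4/\Spin(7)$ for the common underlying homogeneous space and assemble the required maps: the fibre bundle argument in the paragraphs preceding the theorem, together with the previous lemma, produces $F_4$-equivariant diffeomorphisms
$$
F_4/i(\Spin(7)) \xrightarrow{\;\sim\;} V_2(U_9), \qquad F_4/j(\Spin(7)) \xrightarrow{\;\sim\;} S(\Delta_9).
$$
Composing these (in the two directions, using $\Phi$ to move between the models) yields the desired $F_4$-equivariant diffeomorphisms $e_1$ and $e_2$.

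The proof itself is now essentially formal; the real content lies in the input that $i$ and $j$ are conjugate in $F_4$, even though they are not conjugate in $\Spin(9)$ (where they yield the two non-diffeomorphic spaces $V_2(\R^9)$ and $S^{15}$). This is a genuine manifestation of triality: the outer automorphism of $\Spin(8)$ that swaps the vector representation $u_8$ with a spin representation $\delta_8^{+}$ becomes inner once one passes from $\Spin(9)$ to $F_4$. This statement is the nontrivial ingredient cited from Adams \cite{AdamsTrialityallthat}, and is the step I would flag as the main obstacle if one wished to give a self-contained treatment.
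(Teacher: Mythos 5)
Your proposal is correct and follows the same route as the paper: the paper also deduces the theorem immediately from the conjugacy of $i$ and $j$ in $F_4$ together with the identifications $F_4/i(\Spin(7)) \cong V_2(U_9)$ and $F_4/j(\Spin(7)) \cong S(\Delta_9)$ established just beforehand. Your write-up merely makes explicit the standard right-translation diffeomorphism between quotients by conjugate subgroups, which the paper leaves implicit.
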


This theorem is quite surprising. We get two fibre bundles 
$$
S^{15} \to S(\Delta_{9}) \to P, \quad V_2(\R^9) \to V_2(U_9) \to P
$$ 
with diffeomorphic total spaces and bases. Clearly, since the fibres of these bundles are not diffeomorphic, there is no fibre preserving diffeomorphism. The $E_2$ pages of the Serre spectral sequences of the two fibre bundles look very different but they converge to the same answer.

\section{The integral cohomology groups of $Q$.} \label{sec:HQ}

\subsection{The integral cohomology of $P$ and $S$}
The homogeneous space $P = F_4/\Spin(9)$ is the Cayley projective plane.  Its integral cohomology and Pontryagin classes are well known, see  \cite[\S 19]{MR0102800}.  As a ring 
$$
H^*(P) = \frac{\Z[a]}{(a^3)} \quad \text{ with } a \in H^8(P).
$$
Furthermore $a$ can be chosen so that the total Pontryagin class $p(TP)$ and the Euler class $e(TP)$ are given by
$$
p(TP) = 1 + 6a +39a^2 \quad \text{ and } \quad e(TP) = 3a^2.
$$

The homogeneous space $S$ is the sphere bundle $S(TP)$. A simple argument with the Gysin sequence of this sphere bundle shows that
$$
H^i(S) =
\begin{cases} 
 \Z \quad & \text{if $i= 0, 8, 23, 31$,} \\
\Z/3         &\text{if $i = 16$,}\\
0             &\text{ otherwise}.
\end{cases}
$$

\subsection{The integral cohomology groups of $Q$} \label{sec:intcohQ}
An argument along the lines of \cite[Section 7.4]{JRTI}, shows that 
$$
H^*(Q; \Q) = \frac{\Q[a_2, a_8]}{(\rho_{16}, \rho_{24})}
$$
and its Poincar\'e polynomial is 
$$
(1 + x^2 + x^4 + \dots + x^{14})(1 + x^8 + x^{16}).
$$
We also know from \cite[Theorem~A]{MR87035} that $H^*(Q)$ is torsion free.

Let $q : Q \to P$ be the fibre bundle with fibres diffeomorphic to the Grassmannian of oriented $2$-planes in $\R^9$ \[ \Gr_2(\R^9) = \frac{\Spin(9)}{\Spin(7)\times_{C_2}\Spin(2)},\] also known as the complex quadric. 

There are elements $e \in H^2(\Gr_2(\R^9))$ and $b \in H^8(\Gr_2(\R^9))$ such that
$$
H^*(\Gr_2(\R^9)) = \frac{\Z[e, b]}{(e^4 -2b, b^2)}.
$$
See \cite[Section~9]{BottSpace} for a discussion of this result. 

The ring homomorphism $q^* : H^*(P) \to H^*(Q)$ makes $H^*(Q)$ into a module over $H^*(P)$. The following result gives the structure of $H^*(Q)$ as a module over $H^*(P)$.

\begin{lemma} \label{lem:cohofQ}
Let $i : \Gr_2(\R^9) \to Q$ be the inclusion of a fibre of $q : Q \to P$.
\begin{enumerate}
\item 
$i^* : H^*(Q) \to H^*(\Gr_2(\R^9))$ is surjective, and $q^* : H^*(P) \to H^*(Q)$ is injective. 
\item
Choose $s \in H^2(Q)$, $v \in H^8(Q)$ such that $i^*(s) = e$ and $i^*(v)= b$.  Then $H^*(Q)$ is a free module over $H^*(P)$ with basis
$$
1, \quad s, \quad s^2, \quad s^3, \quad v, \quad sv, \quad s^2v, \quad s^3v.
$$
\item 
The ring $H^*(Q)$ is generated by $s, v, a$.
\end{enumerate}
\end{lemma}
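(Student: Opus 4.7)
My plan is to analyse the Serre spectral sequence of the fibre bundle $q : Q \to P$ with fibre $F = \Gr_2(\R^9)$. Since $F_4$ and $\Spin(9)$ are connected, $P$ is simply connected, so the local coefficient system on the $E_2$-page is trivial and
$$ E_2^{p,q} = H^p(P) \otimes H^q(F) . $$
Using the descriptions of $H^*(P)$ and $H^*(F)$ recalled above, the $E_2$-page is torsion-free of total rank $3 \cdot 8 = 24$. The quoted Poincar\'e polynomial shows $H^*(Q;\Q)$ has total dimension $24$, and combined with the torsion-freeness of $H^*(Q)$, this means $H^*(Q)$ itself has total rank $24$. Since $E_\infty$ is the associated graded of a filtration on $H^*(Q)$, any nontrivial differential would strictly reduce the total rank of $E_\infty$; the matching rank counts therefore force the spectral sequence to collapse at $E_2$.

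Part (i) is immediate from the collapse. The base edge homomorphism factors as $H^*(P) = E_2^{*,0} = E_\infty^{*,0} \hookrightarrow H^*(Q)$ and coincides with $q^*$, so $q^*$ is injective; the fibre edge homomorphism factors as $H^*(Q) \twoheadrightarrow E_\infty^{0,*} = E_2^{0,*} = H^*(F)$ and coincides with $i^*$, so $i^*$ is surjective. For (ii), fix lifts $s \in H^2(Q)$ and $v \in H^8(Q)$ with $i^*(s) = e$, $i^*(v) = b$. Because $i^*$ is a ring homomorphism, $i^*(s^j) = e^j$ and $i^*(s^j v) = e^j b$, so the eight classes $s^j, s^j v$ ($0 \le j \le 3$) lift the additive basis of $H^*(F)$. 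The $H^*(P)$-linear map
$$ \varphi : H^*(P)^{\oplus 8} \longrightarrow H^*(Q) $$
sending the standard basis to $s^j$ and $s^j v$ induces, on the associated graded with respect to the Serre filtration, the tautological isomorphism $H^*(P) \otimes H^*(F) \to E_\infty$. Since the filtration is finite and exhaustive in each degree and the associated graded map is an isomorphism, $\varphi$ is itself an isomorphism, giving the claimed $H^*(P)$-module basis.

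Part (iii) then follows formally: by (ii), every element of $H^*(Q)$ is an $H^*(P)$-linear combination of the monomials $s^j v^k$ with $0 \le j \le 3$ and $k \in \{0,1\}$, and $H^*(P)$ is generated as a ring by $a$. Hence $s$, $v$, $a$ generate $H^*(Q)$ as a ring.

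The main obstacle is establishing the collapse of the \emph{integral} spectral sequence. Rational collapse follows directly from comparing Poincar\'e polynomials, but to conclude integrally one must invoke the torsion-freeness of $H^*(Q)$ together with the full rank count; these together rule out both rank-reducing and torsion-creating differentials. Once collapse is in hand, everything else is a routine associated-graded and module-basis argument.
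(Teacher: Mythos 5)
Your argument is correct, and it follows the same overall strategy as the paper: collapse of the Serre spectral sequence of $q : Q \to P$ followed by (the proof of) Leray--Hirsch. The one genuine difference is how you establish the collapse. You count ranks: the $E_2$-page has total rank $24$, and since each page is free abelian a nonzero differential would have free image of positive rank and strictly drop the total rank of $E_\infty$ below the known total rank of $H^*(Q)$. This works, but it imports two external facts -- the Poincar\'e polynomial of $Q$ and (though you do not actually need it for the rank count, since rank is insensitive to torsion) the torsion-freeness of $H^*(Q)$. The paper's justification is shorter and self-contained: $H^*(P)$ and $H^*(\Gr_2(\R^9))$ are both concentrated in even degrees, so the $E_2$-page lives entirely in even total degree, while every differential $d_r$ shifts total degree by $1$; hence all differentials vanish identically for parity reasons, with no prior knowledge of $H^*(Q)$ required. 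Your rank-counting route is a reasonable fallback in situations where the parity argument is unavailable, but here it is strictly more work; the careful associated-graded justification of the $H^*(P)$-module basis in your part (ii) is exactly what the paper compresses into the citation of Leray--Hirsch.
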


\begin{proof} 
Since both $H^*(P)$ and $H^*(\Gr_2(\R^9))$ are zero in odd degrees the Serre spectral sequence of the fibre bundle $q : Q \to P$ collapses at the $E_2$ page. The result follows from the Leray-Hirsch theorem.
\end{proof}

Since $q^* : H^*(P) = \Z[a]/(a^3) \to H^*(Q)$ is injective, from now on we regard $\Z[a]/(a^3)$ as a subring of $H^*(Q)$. We have a basis for $H^*(Q)$ and we need to calculate products between $s$, $v$, and $a$. To do this we use characteristic classes.  

\subsection{Products in $H^*(Q)$}
The real representations $\Spin(7) \times_{C_2} \Spin(2) \to \SO(7)$ and $\Spin(7) \times_{C_2} \Spin(2) \to \SO(2)$ define real homogeneous vector bundles $U_7$ and $U_2$ of dimensions $7$ and $2$ over $Q$.  The real representation $\Spin(9) \to \SO(9)$ defines a real vector bundle $U_9$ over $P$.  Evidently 
$$
U_7 \oplus U_2 = q^* (U_9)
$$
and applying characteristic classes to this identity of vector bundles will give us relations between products of $s, v, a$.

\begin{lemma} \label{lem:cohofQmore}
Let $i : \Gr_2(\R^9) \rightarrow Q$ be the inclusion of a fibre of $q: Q \to P$.  There exists a unique element $v \in H^8(Q)$ such that
$$
i^*(v) = b, \quad 2v = s^4 + a.
$$
\end{lemma}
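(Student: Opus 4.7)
Uniqueness is immediate: two elements $v_1, v_2 \in H^8(Q)$ satisfying both conditions give $2(v_1-v_2)=0$ in $H^8(Q)$, which is torsion-free by \cite{MR87035}, so $v_1 = v_2$.

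For existence, pick any $v_0 \in H^8(Q)$ with $i^*(v_0) = b$ (Lemma~\ref{lem:cohofQ}(i)); by Lemma~\ref{lem:cohofQ}(ii), $H^8(Q) = \Z\{q^*a\}\oplus\Z\{v_0\}$. Since $i^*(s^4) = e^4 = 2b = 2i^*(v_0)$, there is a unique $\alpha \in \Z$ with $s^4 = 2v_0 + \alpha\,q^*a$. Setting $v := v_0 + \tfrac{\alpha+1}{2}\,q^*a$ meets both required relations provided $\alpha$ is odd, and the whole lemma reduces to the parity claim that $\alpha$ is odd.

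The strategy is Chern--Weil via the identity $U_7 \oplus U_2 = q^*(U_9)$ of real vector bundles over $Q$. Since $U_2$ is oriented of rank~$2$ with $e(U_2) = s$, $p(U_2) = 1 + s^2$; and $p_1(U_9) = 0$ since $H^4(P) = 0$. The degree-$8$ Whitney relation gives $p_2(U_7) = s^4 + q^*p_2(U_9)$. Applying the splitting principle to $\Spin(9)$ (with the Pontryagin roots $\tfrac{1}{2}(\pm x_1\pm x_2\pm x_3\pm x_4)$ of $\delta_9$ written in those of $u_9$) yields the universal identity $p_2(\delta_9) = \tfrac{7}{4}p_1(u_9)^2 - p_2(u_9)$; on $P$, where $p_1(u_9) = 0$, this collapses to $p_2(\delta_9) = -p_2(u_9)$, so matching $p_2(TP) = 6a$ gives $p_2(U_9) = -6a$. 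Restricting to a fibre, $U_7|_F = \tau^\perp$ satisfies $p(\tau^\perp) = (1+e^2)^{-1} = 1 - e^2 + 2b - 2be^2$, so $i^*p_2(U_7) = 2b$, and hence $p_2(U_7) = 2v_0 + \gamma\,q^*a$ for some $\gamma \in \Z$. Combining, $\alpha = \gamma + 6$, so $\alpha$ has the same parity as $\gamma$.

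The main obstacle is determining that parity. The mod-$2$ Stiefel--Whitney computation (using $w_2(U_9) = 0$, $w_2(U_2) = \bar s$, $H^4(P;\Z/2) = 0$) produces $w_4(U_7) = \bar s^2$, hence $p_2(U_7) \equiv \bar s^4 \pmod 2$; but reducing $s^4 = 2v_0 + \alpha q^*a$ mod~$2$ gives $\bar s^4 = \alpha \bar a$, so one only recovers the tautology $\gamma \equiv \alpha\pmod 2$, which is consistent with both parities. To pin down $\alpha$ I would compute the $30$-dimensional characteristic number $\int_Q q^*a \cdot v_0 \cdot s^7$---which by the module-basis expansion equals $2\mu + \alpha$, where $v_0^2 = \lambda\, q^*a^2 + \mu\,q^*a\cdot v_0$---independently via the Borel--Hirzebruch/Weyl integration formula on $Q = F_4/(\Spin(7)\times_{C_2}\Spin(2))$, and then compare parities. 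This explicit evaluation on $Q$ is the principal computational step of the argument.
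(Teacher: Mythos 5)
Your reduction is correct as far as it goes: writing $s^4 = 2v_0 + \alpha\, q^*a$ in the Leray--Hirsch basis and observing that the lemma is equivalent to $\alpha$ being odd is the right normalisation, and your uniqueness argument via torsion-freeness is fine. But there is a genuine gap: the parity of $\alpha$ --- which you yourself flag as ``the principal computational step'' --- is never established. You propose to extract it by evaluating a degree-$30$ characteristic number on $Q$ via Weyl integration over $F_4/(\Spin(7)\times_{C_2}\Spin(2))$ and comparing parities, but that computation is not carried out, and it is far heavier than what the statement requires. Your Pontryagin-class detour ($p_2(\delta_9) = \tfrac74 p_1(u_9)^2 - p_2(u_9)$, etc.) is internally consistent but, as you correctly note, only returns the tautology $\gamma \equiv \alpha \pmod 2$.

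The missing idea is elementary: push the mod-$2$ Whitney formula one degree \emph{past the rank} of $U_7$. From $w(U_7)\,w(U_2) = q^*w(U_9)$ with $w(U_2) = 1+s$ and $w(U_9) = 1+a$ one gets $w(U_7) = (1+a)(1+s)^{-1}$ in $H^*(Q;\Z/2)$, whose degree-$8$ component is $s^4 + a$. Since $U_7$ has rank $7$, $w_8(U_7) = 0$, so $s^4 + a \equiv 0 \pmod 2$; as $H^*(Q)$ is torsion free, $s^4+a$ is uniquely divisible by $2$, and the resulting $v$ satisfies $i^*(v) = b$ because $2i^*(v) = e^4 + 0 = 2b$. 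You stopped the Stiefel--Whitney computation at $w_4(U_7) = s^2$, which is why you only saw a tautology; the information you need sits in $w_8(U_7)$, not in $w_4(U_7)$ or in $p_2(U_7)$.
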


\begin{proof}
Let $w$ be the total Stiefel-Whitney class.  Then applying $w$ to the above relation between bundles gives
$$ 
w(U_7) w(U_2) = q^* (w(U_9)).
$$  
Simple arguments show that in $H^*(P; \Z/2)$, and $H^*(Q; \Z/2)$ respectively,
$$
w(U_9) = 1 + a,  \quad w(U_2) = 1 + s \mod 2. 
$$
It follows that
$$
w(U_7)(1+s) = 1 + a \mod 2,
$$
and in particular
$$
w_8(U_7) = s^4 + a \mod 2.
$$
Since $U_7$ is $7$-dimensional, $w_8(U_7) = 0$ and we conclude that
$$
s^4 + a = 0 \mod 2.
$$
This shows that $s^4 + a \in H^{8}(Q)$ is divisible by $2$ and since $H^*(Q)$ is torsion free it is uniquely divisible by $2$.
\end{proof}

\begin{corollary}
Let $s$ and $v$ be as in the above lemma. The ring $H^*(Q)$ is generated by $s \in H^2(Q)$ and $v \in H^8(Q)$.
\end{corollary}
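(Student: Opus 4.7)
The plan is to combine the two preceding lemmas. From Lemma~\ref{lem:cohofQ}(iii) we already know that $H^*(Q)$ is generated as a ring by the three classes $s$, $v$, and $a$, so it suffices to express $a$ as a polynomial in $s$ and $v$. But this is immediate from Lemma~\ref{lem:cohofQmore}: the relation $2v = s^4 + a$ rearranges to
$$
a = 2v - s^4,
$$
which shows $a$ lies in the subring generated by $s$ and $v$. Substituting this expression wherever $a$ appears in any polynomial representation of a class in $H^*(Q)$ yields a polynomial in $s$ and $v$ only.

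There is essentially no obstacle here; the corollary is a direct bookkeeping consequence of the two lemmas above it, and the only subtlety is that the relation $2v = s^4 + a$ holds as an \emph{integral} equality (guaranteed by the torsion-freeness of $H^*(Q)$ cited in the proof of Lemma~\ref{lem:cohofQmore}), so no division or mod-$2$ ambiguity enters.
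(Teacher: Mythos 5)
Your proof is correct and is essentially the same as the paper's: both combine Lemma~\ref{lem:cohofQ}(iii) with the relation $2v = s^4 + a$ from Lemma~\ref{lem:cohofQmore} to eliminate $a$. Your extra remark about the relation being an integral equality is accurate but not needed beyond what the lemma already asserts.
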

\begin{proof}
Lemma \ref{lem:cohofQ} shows that $H^*(Q)$ is generated by $s, a, v$ and Lemma \ref{lem:cohofQmore} shows that $2v = s^4 + a$. 
\end{proof}

This allows us to fix the generators of $H^*(Q)$, explicitly:
\begin{enumerate}
\item
$s \in H^2(Q)$ is the Euler class of $U_2$,
\item
$v \in H^8(Q)$ is the unique class such $i^*(v) = b$ and $2v  = s^4 + a$.
\end{enumerate}

We now need to find two relations between these generators, one in degree $16$ and the other in degree $24$.

\begin{lemma} In $H^{16}(Q)$ we have the relation
\[
s^8 = 3v^2.
\]
\end{lemma}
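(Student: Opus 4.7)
The plan is to determine $v^2$ inside the two-dimensional lattice $H^{16}(Q) = \Z a^2 \oplus \Z a v$, which comes from the Leray--Hirsch basis of Lemma~\ref{lem:cohofQ}: among the module generators $1, s, s^2, s^3, v, sv, s^2 v, s^3 v$, only $1$ and $v$ pair with elements of $H^*(P) = \Z[a]/(a^3)$ to produce total degree $16$. Using $s^4 = 2v - a$ from Lemma~\ref{lem:cohofQmore}, one has $s^8 = (2v-a)^2 = 4v^2 - 4av + a^2$, so the claim $s^8 = 3v^2$ is equivalent to the single identity $v^2 = 4av - a^2$ in this lattice. It therefore suffices to pin down the two coordinates of $v^2$ in the basis $\{a^2, av\}$.

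To do this I will apply total Pontryagin classes to the real vector bundle identity $U_7 \oplus U_2 = q^*(U_9)$. Since $U_2$ is oriented of rank $2$ with Euler class $s$, $p(U_2) = 1 + s^2$; and since $U_7$ has rank $7$, $p_4(U_7) = 0$. The vanishings $H^4(P) = H^{12}(P) = 0$ force $p_1(U_9) = p_3(U_9) = 0$, so the only unknowns in $p(U_9)$ are integers $\gamma_2, \gamma_4$ with $p_2(U_9) = \gamma_2 a$ and $p_4(U_9) = \gamma_4 a^2$. Expanding $p(U_7)(1+s^2) = q^*(p(U_9))$ degree by degree yields $p_1(U_7) = -s^2$, $p_2(U_7) = s^4 + \gamma_2 a$, $p_3(U_7) = -s^6 - \gamma_2 s^2 a$, and, in degree $16$, the constraint $s^2 p_3(U_7) = \gamma_4 a^2$ (using $p_4(U_7)=0$). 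Putting these together gives $s^8 = -\gamma_2 s^4 a - \gamma_4 a^2$, and substituting $s^4 = 2v - a$ rewrites this as $s^8 = -2\gamma_2\, av + (\gamma_2 - \gamma_4) a^2$. Equating with $4v^2 - 4av + a^2$ then solves for $v^2$ immediately in terms of $\gamma_2$ and $\gamma_4$.

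The main obstacle is computing $\gamma_2, \gamma_4$. For this I would invoke Borel--Hirzebruch for the homogeneous space $P = F_4/\Spin(9)$: in terms of the Chern roots $x_1, \ldots, x_4$ of a maximal torus of $\Spin(9)$, the class $p_k(U_9)$ is the $k$-th elementary symmetric function of the $x_i^2$, while $p(\Delta_9) = \prod_{\epsilon_1 = 1}(1 + y_\epsilon^2)$ is the product over the eight half-sum weights $y_\epsilon = \tfrac{1}{2}(\epsilon_1 x_1 + \epsilon_2 x_2 + \epsilon_3 x_3 + \epsilon_4 x_4)$. Expanding each $p_k(\Delta_9)$ via Newton's identities as a $\Q$-polynomial in $p_1(U_9), \ldots, p_4(U_9)$, then reducing modulo $p_1(U_9) = p_3(U_9) = 0$, expresses $p_2(TP)$ and $p_4(TP)$ as $\Q$-linear combinations of $\gamma_2 a$, $\gamma_2^2 a^2$ and $\gamma_4 a^2$. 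Comparing with the known values $p_2(TP) = 6a$ and $p_4(TP) = 39 a^2$ gives two linear equations whose solution is $\gamma_2 = -6$, $\gamma_4 = -3$; the fact that these come out as integers is compatible with the torsion-freeness of $H^*(P)$. Substituting back yields $s^8 = 12 av - 3 a^2$ and hence $v^2 = 4av - a^2$, proving $s^8 = 3v^2$.
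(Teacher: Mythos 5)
Your proof is correct and follows essentially the same route as the paper: apply total Pontryagin classes to $U_7 \oplus U_2 = q^*(U_9)$, use $p_4(U_7)=0$ to extract the degree-$16$ relation $s^8 = 6as^4 + 3a^2$ in $H^{16}(Q)=\Z a^2\oplus\Z av$, and substitute $s^4 = 2v-a$. The only real difference is that you re-derive the coefficients $p_2(U_9)=-6a$, $p_4(U_9)=-3a^2$ from the known $p(TP)$ via the Borel--Hirzebruch weight calculus, whereas the paper simply cites \cite[Theorem~19.4]{MR0102800} for $p(U_9)=1-6a-3a^2$; your values agree with that citation, so the argument closes.
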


\begin{proof}
Let $p$ be the total Pontryagin class.  Since $H^*(Q)$ is torsion free, the relation $U_7 + U_2 = q^*(U_9)$ between bundles gives the relation
$$
p(U_2)p(U_7) = q^*(p(U_9)).
$$
Now $p(U_2) = 1 - s^2$ and from \cite[Theorem~19.4]{MR0102800} we know that $p(U_9) = 1 - 6a - 3a^2$.  It follows that
$$
p_4(U_7)  = s^8  - 6as^4 -3a^2.
$$
Since $U_7$ is $7$-dimensional, $p_i(U_7) = 0$ for $i \geq 4$ and so
$$
s^8  - 6as^4 -3a^2 =0.
$$
Substitute $a = 2v - s^4$ and this simplifies to
$$
4s^8 = 12v^2.
$$
Since $H^*(Q)$ is torsion free this proves the lemma.  
\end{proof}

The relation in degree $24$ is the obvious one coming from $P$: 
$$
a^3 = (2v - s^4)^3 = 0.
$$

\begin{theorem} \label{thm:intcohQ}
The integral cohomology ring of $Q$ is given by
\[
H^*(Q) = \frac{\Z[s,v]}{(\rho_{16},\rho_{24})},
\]
where $s \in H^2(Q)$, $v \in H^8(Q)$ are as above, and the relations are
\[
\rho_{16} = s^8 - 3v^2, \quad \rho_{24} = (2v - s^4)^3.
\]
\end{theorem}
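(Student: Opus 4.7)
The plan is to construct the obvious candidate ring homomorphism and show it is an isomorphism by a graded rank comparison.

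There is a well-defined graded ring homomorphism
$$
\phi \colon R_0 := \Z[s,v]/(\rho_{16}, \rho_{24}) \longrightarrow H^*(Q),
$$
since both $\rho_{16}$ and $\rho_{24}$ have already been shown to vanish in $H^*(Q)$ in the preceding discussion; and $\phi$ is surjective because $s$ and $v$ generate $H^*(Q)$ as a ring, by the Corollary above. It remains to establish injectivity.

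Lemmas \ref{lem:cohofQ} and \ref{lem:cohofQmore} together exhibit $H^*(Q)$ as a free abelian group of rank $24$ with $\Z$-basis
$$
\mathcal{B} = \{\, a^i s^j v^k : 0 \le i \le 2,\ 0 \le j \le 3,\ 0 \le k \le 1 \,\},
$$
where $a = 2v - s^4$. Write $\tilde a := 2v - s^4 \in R_0$ and let $\tilde{\mathcal{B}}$ be the analogous $24$ elements $\{\tilde a^i s^j v^k\}$ of $R_0$. Then $\phi$ carries $\tilde{\mathcal{B}}$ onto $\mathcal{B}$, so once $\tilde{\mathcal{B}}$ is shown to $\Z$-span $R_0$, the map $\phi$ will be a surjection sending a spanning set of size $24$ onto a free $\Z$-basis of rank $24$, and hence automatically an isomorphism.

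To verify the spanning claim, three reductions are available in $R_0$. First, the identity $s^4 = 2v - \tilde a$, true by definition of $\tilde a$, brings the $s$-degree of any monomial below $4$. Second, substituting $s^4 = 2v - \tilde a$ into $\rho_{16} = s^8 - 3v^2 = 0$ yields the integer relation $v^2 = 4\tilde a v - \tilde a^2$, which brings the $v$-degree below $2$. Third, $\rho_{24} = \tilde a^3 = 0$ caps the $\tilde a$-degree at $2$. Applied in sequence, these express an arbitrary monomial $s^j v^k \in R_0$ as a $\Z$-linear combination of $\tilde{\mathcal{B}}$. The main (and essentially bookkeeping) obstacle is to verify that at no step is one forced to divide by $2$ or $3$, so that $\tilde{\mathcal{B}}$ really spans $R_0$ over $\Z$ rather than only over $\Q$.
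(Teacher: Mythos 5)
Your proof is correct and takes essentially the same route as the paper: both construct the evident surjection $\Z[s,v]/(\rho_{16},\rho_{24}) \to H^*(Q)$ and conclude by a rank comparison, the paper via a degree-by-degree count and you via a global count against the $24$-element $\Z$-basis $\{a^i s^j v^k\}$ supplied by Lemma~\ref{lem:cohofQ}. Your explicit integral reductions $s^4 = 2v - \tilde a$, $v^2 = 4\tilde a v - \tilde a^2$, $\tilde a^3 = 0$ are exactly what makes the paper's ``simple counting argument'' go through, so the remaining bookkeeping you defer is genuinely routine.
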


\begin{proof}
We have constructed a surjective ring homomorphism
$$
\frac{\Z[s,v]}{(\rho_{16},\rho_{24})} \to H^*(Q).
$$
A simple counting argument shows the rank of the homogeneous degree $k$ part of $\Z[s,v] /(\rho_{16},\rho_{24})$ is free abelian with the same rank as $H^k(Q)$.  Therefore this ring homomorphism is an isomorphism.
\end{proof}

Finally we choose a generator $[Q] \in H^{30}(Q) = \Z$ and use this to write down the products in $H^*(Q)$ to the top dimension.
\begin{theorem} \label{thm:tabQ}
The products to $H^{30}(Q)$ are given by the following table.
\begin{center}
\begin{tabular}{ | c | c | c | c |}
\hline
$s^{15}$ & $s^{11}v$ & $s^7v^2$ & $s^3v^3 $\\
\hline
$3 \cdot 26$ & $3 \cdot 15$ & $26$ & $15$ \\
\hline
\end{tabular}
\end{center}
\end{theorem}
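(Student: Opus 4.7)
The plan is to reduce all four top-degree monomials to a common primitive generator of $H^{30}(Q)$. The monomials in $\Z[s,v]$ (with $\deg s = 2$, $\deg v = 8$) of total degree $30$ are precisely $s^{15}$, $s^{11}v$, $s^{7}v^{2}$, and $s^{3}v^{3}$; by Theorem~\ref{thm:intcohQ} combined with the rank count from the Poincaré polynomial, $H^{30}(Q)\cong\Z$, so these four classes must all be integer multiples of a chosen generator.

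First I would apply $\rho_{16} = s^{8} - 3v^{2} = 0$: multiplying by $s^{7}$ gives $s^{15} = 3\,s^{7}v^{2}$, and multiplying by $s^{3}v$ gives $s^{11}v = 3\,s^{3}v^{3}$. It therefore suffices to pin down the ratio between $s^{7}v^{2}$ and $s^{3}v^{3}$ in $H^{30}(Q)$. For this I would expand
\[
\rho_{24} = (2v - s^{4})^{3} = 8v^{3} - 12\,s^{4}v^{2} + 6\,s^{8}v - s^{12}
\]
and substitute $s^{8} = 3v^{2}$, which also gives $s^{12} = 3\,s^{4}v^{2}$, to obtain $26v^{3} - 15\,s^{4}v^{2} = 0$. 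Multiplying through by $s^{3}$ yields the key identity $26\,s^{3}v^{3} = 15\,s^{7}v^{2}$ in $H^{30}(Q)$.

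Finally I would choose the fundamental class $[Q]\in H_{30}(Q)\cong\Z$. Writing $\langle s^{3}v^{3},[Q]\rangle = k$ and $\langle s^{7}v^{2},[Q]\rangle = m$, the identity forces $26k = 15m$, and since $\gcd(15,26)=1$ we have $k = 15c$, $m = 26c$ for some $c\in\Z$. The fact that $[Q]$ is a generator rather than a proper multiple, together with a choice of orientation, fixes $c = 1$, giving $\langle s^{3}v^{3},[Q]\rangle = 15$ and $\langle s^{7}v^{2},[Q]\rangle = 26$; the other two table entries $3\cdot 26$ and $3\cdot 15$ are immediate from the identities derived in the previous step. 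There is no serious obstacle here — the calculation is purely algebraic once Theorem~\ref{thm:intcohQ} is in hand, and the only care needed is the Bezout/orientation bookkeeping to conclude $c = 1$.
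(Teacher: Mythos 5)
Your proposal is correct and follows essentially the same route as the paper: both use $s^{7}\rho_{16}$, $s^{3}v\,\rho_{16}$ and $s^{3}\rho_{24}$ (after substituting $s^{8}=3v^{2}$) to reduce everything to the single relation $15\,s^{7}v^{2}=26\,s^{3}v^{3}$ and then invoke coprimality of $15$ and $26$. The one point to make explicit in your final step is that $s^{7}v^{2}$ and $s^{3}v^{3}$ actually \emph{generate} $H^{30}(Q)$ (immediate from the presentation in Theorem~\ref{thm:intcohQ}), since that is what forces $\gcd(k,m)=1$ and hence $c=\pm 1$, rather than the generator property of $[Q]$ alone.
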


\begin{proof}
The abelian group $H^{30}(Q)$ has the following presentation.  There are four generators
\[
s^{15}, \quad s^{11}v, \quad s^7v^2, \quad s^3v^3
\]
and three relations 
\[s^7\rho_{16} = 0, \quad s^3v\rho_{16} = 0, \quad s^3\rho_{24} =0.\] 
We let
\[
a = s^{7}v^2, \quad b = s^3v^3. 
\]
Then the relations show that
$$
s^{15} = 3a, \quad s^{11}v = 3b, \quad 15a = 26b.
$$
So $H^{30}(Q)$ is the abelian group generated by $a, b$ with the single relation $15a = 26b$.  Since $15$ and $26$ are coprime this abelian group is isomorphic to $\Z$, and the isomorphism is the unique homomorphism such that $a \mapsto 26$ and $b \mapsto 15$.
This gives the values in the table. 
\end{proof}

\section{The integral cohomology ring of $R$.} \label{sec:HR}
We know that the cohomology of both $R$ and $Q$ is torsion free and zero in odd degrees. From \cite[Section~7.4]{JRTI}, we see that the Poincar\'e polynomial of $R$ is
\[
(1 + x^2 + x^4 + \dots + x^{16})(1 + x^8 + x^{16}),
\]
and recall from Section \ref{sec:intcohQ} that the Poincar\'e polynomial of $Q$ is
\[
(1 + x^2 + x^4 + \dots + x^{14})(1 + x^8 + x^{16}).
\]
The following table, which highlights the very small difference between $R$ and $Q$, lists the non-zero Betti numbers of $R$ and $Q$.

\vspace{\baselineskip}

\begin{center}
\begin{tabular}{| c | c | c | c | c | c | c | c | c | c | c | c | c | c | c | c | c | c |} 
\hline
2k & 0 & 2 & 4 & 6 & 8 & 10 & 12 & 14 & 16 & 18 & 20 & 22 & 24 & 26 & 28 & 30 & 32  \\
\hline
$b_{2k}(R_5)$ & 1 & 1 & 1 & 1 & 2 & 2 & 2 & 2 & 3 & 2 & 2 & 2 & 2 & 1 & 1 & 1 & 1 \\
\hline
$b_{2k}(Q) $ & 1 & 1 & 1 & 1 & 2 & 2 & 2 & 2 & 2 & 2 & 2 & 2 & 1 & 1 & 1 & 1 &  \\
\hline
\end{tabular}
\end{center}

\vspace{\baselineskip}

Let $i : P \to R$ be the embedding of $P$ in $R$. Then we have the usual induced homomorphisms $i_*,i^*$ in homology and cohomology, respectively.  However both $P$ and $R$ are orientable manifolds and $i$ is a codimension $16$ embedding, so we also have the umkehr homomorphisms 
$$
i_{!} : H^r(P) \to H^{r+16}(R).
$$
Recall that $i_{!}$ is defined by the following commutative diagram
$$
\begin{CD}
H^{r}(P) @>i_{!}>> H^{r+16}(R) \\
@VVV @VVV \\
H_{16-r}(Q) @>>{i_*}> H_{16-r}(R)
\end{CD}
$$
where the vertical arrows are the Poincar\'e duality isomorphisms. Now let $j : Q \to R$ be the codimension $2$ embedding of $Q$.  In this case we have $j_*, j^*$ and 
$$
j_! : H^r(Q) \to H^{r+2}(R).
$$ 
We know that the cohomology groups of $P, Q, R$ are zero in odd degrees.

\begin{lemma}
There are short exact sequences
$$
\begin{CD}
0 @>>> H^{2k-2}(Q) @>>{j_!}>  H^{2k}(R)@>>{i^*}> H^{2k}(P) @>>> 0,
\end{CD}
$$$$
\begin{CD}
0 @>>> H^{2k-16}(P) @>>{i_!}>  H^{2k}(R)@>>{j^*}> H^{2k}(Q) @>>> 0.
\end{CD}
$$
\end{lemma}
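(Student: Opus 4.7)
The plan is to deduce both short exact sequences from the long exact sequence of a pair $(R, R \setminus Y)$, for $Y \in \{P, Q\}$, combined with the Thom isomorphism. The Mostert decomposition $R = D_P \cup_S D_Q$ from Theorem \ref{mainthm:Mostert} supplies the geometric input.

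First I would use that decomposition to identify the homotopy types of the complements. Since $Q$ sits inside $R$ as the zero section of the disc bundle $D_Q$, the punctured bundle $D_Q \setminus Q$ deformation retracts onto the sphere bundle $\partial D_Q = S_Q$, which is glued along $S$ to $\partial D_P$ under the Mostert description. Hence
\[
R \setminus Q \; = \; D_P \cup_S (D_Q \setminus Q) \; \simeq \; D_P \cup_S S \; = \; D_P \; \simeq \; P,
\]
and a symmetric argument shows $R \setminus P \simeq Q$.

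Next, the normal bundles $N_P$ and $N_Q$ are orientable because $R$, $P$, and $Q$ are all orientable, so excision together with the Thom isomorphism gives
\[
H^n(R, R \setminus P) \cong H^{n-16}(P), \qquad H^n(R, R \setminus Q) \cong H^{n-2}(Q).
\]
It is a standard fact that, under these identifications, the composite $H^{n-c}(Y) \xrightarrow{\cong} H^n(R, R \setminus Y) \to H^n(R)$ agrees with the Poincar\'e-dual umkehr map defined in the excerpt; under the homotopy equivalences $R \setminus Q \simeq P$ and $R \setminus P \simeq Q$, the restriction $H^n(R) \to H^n(R \setminus Y)$ becomes the pullback along the inclusion of the other special orbit. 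Plugging all this into the long exact sequence of $(R, R \setminus Q)$ yields
\[
\cdots \to H^{2k-1}(P) \to H^{2k-2}(Q) \xrightarrow{j_!} H^{2k}(R) \xrightarrow{i^*} H^{2k}(P) \to H^{2k-1}(Q) \to \cdots,
\]
and an analogous sequence for $(R, R \setminus P)$ with $i_!$ in place of $j_!$ and $j^*$ in place of $i^*$.

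Finally, because the cohomology of $P$ and of $Q$ (by Lemma \ref{lem:cohofQ}) is concentrated in even degrees, the boundary terms $H^{2k-1}(P)$ and $H^{2k-1}(Q)$ in the fragment above vanish, and each long exact sequence collapses to the claimed short exact sequence. The only genuine subtlety, and thus the main obstacle in writing the proof cleanly, is verifying that the connecting map in the pair sequence really does coincide with the Poincar\'e-dual definition of $i_!$, $j_!$ adopted in this paper; this is a standard compatibility result but deserves explicit acknowledgement, since the rest of the argument is purely formal once Theorem \ref{mainthm:Mostert} is in hand.
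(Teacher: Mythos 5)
Your proof is correct and follows essentially the same route as the paper: the paper works with the pair $(R,P)$ and identifies $R/P$ with the Thom space $\Th(N_Q)$ (and dually $R/Q$ with $\Th(N_P)$), which is just the quotient-space formulation of your excision argument for $(R, R\setminus Q)$, and it likewise kills the connecting homomorphism using the vanishing of odd-degree cohomology and identifies the resulting composite with $j_!$ by taking that composite as an alternative definition of the umkehr map.
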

\begin{proof}
By Theorem \ref{mainthm:Mostert}, $R/P$ is the Thom space $\Th(N_Q)$ of the normal bundle of $j : Q \to R$.  The dimension of this bundle is $2$ so the Thom isomorphism
$$ 
H^{s-2}(Q) \to H^{s }(\Th(N_Q))
$$
shows $R/P = \Th(N_Q)$ has no cohomology in odd degrees.  Therefore the connecting homomorphism in the long exact sequence of the pair $(R,P)$ is always zero.  Finally the composite
$$
H^{s-2}(Q) \to  H^s(\Th(N_Q)) = H^s(R/P) \to H^{s}(R)
$$
is an alternative definition of $j_!$.  This gives the first exact sequence. The second follows by identifying $R/Q$ with $\Th(N_P)$ and repeating the argument in this context.
\end{proof}

\begin{corollary} \label{cor:jstarshreik}
\begin{enumerate}
\item The map $j^* : H^s(R) \to H^s(Q)$ is an isomorphism for $s \leq 15$.
\item The map $j_! : H^s(Q) \to H^{s+2}(R)$ is an isomorphism for $s \geq 17$.
\item There is a short exact sequence
$$
\begin{CD}
0 @>>> H^{14}(Q) @>>{j_!}>  H^{16}(R)@>>{i^*}> H^{16}(P) @>>> 0
\end{CD}
$$
\end{enumerate}
\end{corollary}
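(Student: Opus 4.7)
All three statements should fall out almost immediately from the two short exact sequences in the preceding lemma, together with the explicit description of $H^*(P) = \Z[a]/(a^3)$ with $a \in H^8(P)$ (so $H^*(P)$ is supported in degrees $0, 8, 16$ only). Since every group in sight vanishes in odd degrees, it suffices to check each claim when the relevant degree is even.

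For (i), the plan is to feed the second short exact sequence
\[
0 \to H^{2k-16}(P) \xrightarrow{i_!} H^{2k}(R) \xrightarrow{j^*} H^{2k}(Q) \to 0
\]
with $2k \leq 14$. Then $2k - 16 < 0$, so $H^{2k-16}(P) = 0$ and $j^*$ is an isomorphism. The odd degrees $s = 1, 3, \dots, 15$ are trivial since both $H^s(R)$ and $H^s(Q)$ vanish. This handles all $s \leq 15$.

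For (ii), I would use the first short exact sequence
\[
0 \to H^{2k-2}(Q) \xrightarrow{j_!} H^{2k}(R) \xrightarrow{i^*} H^{2k}(P) \to 0
\]
with $2k - 2 \geq 17$, equivalently $2k \geq 20$. Since $H^{2k}(P) = 0$ for all $2k \geq 18$ (the ring $H^*(P)$ being concentrated in degrees $0, 8, 16$), the map $j_!$ is surjective, hence an isomorphism. Again the odd cases $s = 17, 19, \dots$ are automatic.

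For (iii), this is simply the first short exact sequence of the lemma specialised to $2k = 16$. The main obstacle to this whole argument is essentially nonexistent: the only care needed is to keep track of which short exact sequence to use in which degree range and to remember that $H^*(P)$ is supported in exactly three degrees, which is what makes the isomorphism ranges (i) and (ii) so clean and forces the only nontrivial middle range to be the degree $16$ sequence in (iii).
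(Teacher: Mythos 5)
Your proposal is correct and is exactly the intended argument: the paper states this corollary without proof precisely because it follows immediately from the two short exact sequences of the preceding lemma together with the fact that $H^*(P)=\Z[a]/(a^3)$ is concentrated in degrees $0$, $8$, $16$. Your degree bookkeeping (using the $i_!$-sequence for $s\leq 15$, the $j_!$-sequence for $s\geq 17$, and specialising the latter at degree $16$ for part (iii)) is accurate.
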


First, we define $t \in H^2(R)$ and $w \in H^8(R)$ by
$$
j^*(t) = s, \quad j^*(w) = v.
$$

\begin{lemma}
The ring $H^*(R)$ is generated by $t$ and $w$.
\end{lemma}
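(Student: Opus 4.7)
The plan is to combine the two short exact sequences above with the projection formula for the Gysin map $j_!$, together with Theorem \ref{thm:intcohQ}, which tells us that $H^*(Q)$ is generated as a ring by $s = j^*(t)$ and $v = j^*(w)$.

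First I would identify $j_!(1) \in H^2(R)$. The degree-$2$ instance of the first short exact sequence reads
\[
0 \to H^0(Q) \xrightarrow{j_!} H^2(R) \xrightarrow{i^*} H^2(P) \to 0,
\]
and since $H^2(P) = 0$ the map $j_!$ is an isomorphism $\Z \to \Z$. Since $t$ also generates $H^2(R) \iso \Z$ (by Corollary \ref{cor:jstarshreik}, $j^*$ sends $t$ to the generator $s$ of $H^2(Q)$), we conclude $j_!(1) = \pm t$. The projection formula $j_!(j^*(x) \cdot y) = x \cdot j_!(y)$ then gives, for any $y = P(s,v) \in H^*(Q)$,
\[
j_!(y) \;=\; j_!\bigl(j^*(P(t,w)) \cdot 1\bigr) \;=\; P(t,w) \cdot j_!(1) \;=\; \pm\, t\, P(t,w),
\]
which lies in the subring $\Z[t,w] \subseteq H^*(R)$. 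Hence $\im(j_!) \subseteq \Z[t,w]$.

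Next, I would use the first short exact sequence to reduce to the cohomology of $P$. Since $H^*(P) = \Z[a]/(a^3)$ is nonzero only in degrees $0, 8, 16$, it suffices to show that $i^*(w)$ generates $H^8(P) = \Z\{a\}$; for then $i^*(w^k) = \pm a^k$ generates $H^{8k}(P)$ for $k = 0, 1, 2$. By Corollary \ref{cor:jstarshreik}, $j^* \colon H^8(R) \xrightarrow{\sim} H^8(Q)$, and by Theorem \ref{thm:intcohQ} the set $\{s^4, v\}$ is a $\Z$-basis of $H^8(Q)$, so $\{t^4, w\}$ is a basis of $H^8(R)$. The kernel of $i^*|_{H^8(R)}$ equals $\im(j_!|_{H^6(Q)}) = \Z \cdot j_!(s^3) = \Z\{\pm t^4\}$ by the projection formula, so $w$ must map to a generator of the quotient $H^8(P)$.

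Finally, given any even-degree class $x \in H^n(R)$ (the odd cohomology vanishes), I would subtract an appropriate integer multiple of $w^{n/8}$ from $x$ (or nothing, if $n \notin \{0, 8, 16\}$) to kill $i^*(x)$. The resulting difference lies in $\ker(i^*) = \im(j_!) \subseteq \Z[t,w]$, so $x \in \Z[t,w]$. The only real obstacle is the verification that $i^*(w)$ generates $H^8(P)$, which is the content of the preceding paragraph; everything else is formal consequence of the exact sequences and the projection formula.
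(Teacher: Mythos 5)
Your proof is correct, and it uses the same raw ingredients as the paper (the short exact sequence $0 \to H^{2k-2}(Q) \xrightarrow{j_!} H^{2k}(R) \xrightarrow{i^*} H^{2k}(P) \to 0$, the projection formula for $j_!$, the generation of $H^*(Q)$ by $s,v$, and the identification $i^*(w) = \pm a$), but it organizes them differently. The paper splits into three cases by degree: for $k \leq 15$ it uses that $j^*$ is an isomorphism onto $H^k(Q)$, for $k \geq 17$ it uses that $j_!$ is an isomorphism out of $H^{k-2}(Q)$, and degree $16$ is handled separately by exhibiting the explicit basis $j_!(s^7), j_!(s^3v), w^2$. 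You instead give a single uniform argument valid in every degree: since $i^*(w)$ generates $H^8(P)$ and hence $i^*(w^k)$ generates $H^{8k}(P)$, any class can be corrected by an integer multiple of a power of $w$ so as to land in $\ker(i^*) = \im(j_!)$, and the projection formula puts all of $\im(j_!)$ inside $\Z[t,w]$. This buys you economy --- you only need the first of the two exact sequences and avoid the case analysis --- at the cost of leaning slightly harder on the verification that $i^*(w) = \pm a$, which you carry out correctly (identifying $\ker(i^*|_{H^8(R)}) = \Z\, j_!(s^3) = \Z\, t^4$ and quotienting). One small remark: like the paper, you only determine $j_!(1)$ and $i^*(w)$ up to sign, which is harmless here since generation is insensitive to signs; the paper pins down $j_!(1) = t$ exactly via $j^*(j_!(1)) = s$, which matters later when it computes products, but not for this lemma.
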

\begin{proof}
Suppose $x \in H^k(R)$ and $k \leq 15$. Then since $s, v$ generate $H^*(Q)$ the above corollary shows that $x = j^*(p(s,v))$ for some polynomial $p$ in two variables. By  definition, $j^*(s) = t$ and $j^*(v) = w$, so $x = p(t,w)$. Therefore $x$ is a polynomial in $t, w$. 

Now suppose $x \in H^k(R)$ and $k \geq 17$.  This time, the corollary shows that $x = j_!(q(s,v))$ for some polynomial $q$ in two variables.  Since $j^*(t) = s$ and $j^*(w) = v$ it follows that $q(s,v) = j^*(q(t,w))$.  Now the general properties of $j_!$ show that  
\[
 j^*(j_{!}(1)) = s, \quad  j_{!}(j^*(b)c) = bj_{!}(c), \quad j_{!}(j^*(b)) = bt.
\]
The first formula shows that $j_!(1) = t$ so the third formula follows from the first two in the special case $c=1$.  It follows that
\[
x = j_!(q(s, v)) = j_!(j^*(q(t, w))) =q(t, w)t.
\]
This shows that if $k \geq 17$ any $x \in H^k(R)$ can be written as a polynomial in $t$ and $w$.

We are left to prove that the same conclusion is true if $x \in H^{16}(R)$. We claim that $i^*(w^2) = a^2$. Assuming this claim is true, consider the exact sequence
\[
0 \to H^{14}(Q) \to H^{16}(R) \to H^{16}(P) \to 0
\]
in Corollary \ref{cor:jstarshreik}. Then $H^{14}(Q)$ is $\Z \oplus \Z$ with basis $s^7, s^3v$.  It follows that $j_!(s^7), j_!(s^3v), w^2$ is a basis for $H^{16}(R)$. Repeating the argument of the previous paragraph shows that $j_!(s^7) = t^8$ and $j_!(s^3v) = t^4w$.  So every element in $H^{16}(R)$ is also given by a polynomial in $t$ and $w$.

It remains to prove that $i^*(w^2) = a^2$.  Corollary \ref{cor:jstarshreik} shows that the homomorphism $i^*: H^8(R) \to H^8(P)$ is surjective.  We also know that $t^8, w$ is a basis for $H^8(R)$. For degree reasons $i^*(t) = 0$ and so $i^*(t^4) = 0$. Thus, $i^*(w) = \pm a$.
\end{proof}

Next we show how to compute the products to the top degree in $R$.  
\begin{theorem} \label{thm:prodsR}
The products to $H^{32}(R)$ are given by the following table.
\vspace{\baselineskip}
\begin{center}
\begin{tabular}{ | c | c | c | c | c | }
\hline
$t^{16}$ & $t^{12}w$ & $t^8w^2$ & $t^4w^3$ & $w^4$\\
\hline
$3 \cdot 26$ & $3 \cdot 15$ & $26$ & $15$ & $9$ \\
\hline
\end{tabular}
\end{center}
\end{theorem}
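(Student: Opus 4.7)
The plan is to exploit the umkehr isomorphism $j_!: H^{30}(Q) \xrightarrow{\sim} H^{32}(R)$ from Corollary \ref{cor:jstarshreik}, together with the known products on $Q$ from Theorem \ref{thm:tabQ}. First fix the orientation $[R]\in H_{32}(R)$ compatibly with $[Q]$, so that $\langle j_!(x),[R]\rangle = \langle x,[Q]\rangle$ for every $x\in H^{30}(Q)$; this compatibility is built into the Poincar\'e duality square defining $j_!$, since $j_*: H_0(Q)\to H_0(R)$ is the identity under the point-class identifications $H_0(Q)\cong\Z\cong H_0(R)$.

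Next apply the projection formula $j_!(j^*(y))=y\cdot j_!(1)=yt$, using $j_!(1)=t$ (established in the last lemma from $j^*(j_!(1))=s=j^*(t)$ and injectivity of $j^*$ in degree $2$). This yields
\[
j_!(s^{15})=t^{16}, \quad j_!(s^{11}v)=t^{12}w, \quad j_!(s^{7}v^{2})=t^{8}w^{2}, \quad j_!(s^{3}v^{3})=t^{4}w^{3},
\]
so Theorem \ref{thm:tabQ} immediately supplies the first four entries $78$, $45$, $26$, $15$ of the table.

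The last entry $\langle w^4,[R]\rangle$ cannot be obtained this way, since $j^*(w^4)=v^4=0$ in $H^{32}(Q)$ for dimensional reasons. Instead invoke the unimodularity of Poincar\'e duality on the middle group $H^{16}(R)$: by the earlier lemma, $\{t^8, t^4 w, w^2\}$ is a $\Z$-basis for $H^{16}(R)$, and the Gram matrix of products in $H^{32}(R)\cong\Z$ is
\[
M=\begin{pmatrix} 78 & 45 & 26 \\ 45 & 26 & 15 \\ 26 & 15 & n \end{pmatrix}, \qquad n:=\langle w^4,[R]\rangle,
\]
with the first eight entries coming from the previous step. Since $R$ is closed, orientable, and has torsion-free cohomology, $\det M=\pm 1$; expanding along the last row gives $\det M=3n-26$, so $n=9$ is the unique integer solution. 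The main obstacle is the orientation bookkeeping in the first step---verifying that the $[R]$ making $\langle j_!(x),[R]\rangle=\langle x,[Q]\rangle$ hold is indeed the fundamental class used implicitly in the theorem---after which the remainder is pure linear algebra.
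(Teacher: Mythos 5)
Your proposal is correct and follows essentially the same route as the paper: the first four entries via the isomorphism $j_!\colon H^{30}(Q)\to H^{32}(R)$ together with the projection formula $j_!(j^*(y))=yt$ applied to the table in Theorem \ref{thm:tabQ}, and the final entry from unimodularity of the intersection form on $H^{16}(R)$ with basis $t^8, t^4w, w^2$, whose determinant $3n-26$ forces $n=9$. The extra care you take over the orientation compatibility in the definition of $j_!$ is a reasonable addition but not a departure from the paper's argument.
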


\begin{proof}
Note that 
\[
j_! : H^{30}(Q) \to H^{32}(R)
\]
is an isomorphism.  The first four entries in the table follow by applying $j_!$ to the entries in the table in Theorem \ref{thm:tabQ} and using the formula $j_!(j^*(x)) = xt$.  

Now we need to compute $x$, the entry in the table corresponding to $w^4$.  The abelian group $H^{16}(R)$ is free of rank $3$ with basis
\[
t^8, \quad t^4w, \quad w^2.
\]
The intersection matrix with respect to this basis is 
\[
\begin{pmatrix}
3 \cdot 26 & 3\cdot 15 &26 \\
3 \cdot 15 & 26            &15 \\
26             & 15            & x
\end{pmatrix}.
\]
This matrix must have determinant $\pm 1$. The determinant is $3x - 26$ so it follows that $x$ must be $9$.
\end{proof}

This completely determines the ring $H^*(R)$ but it would seem quite perverse not to extract the degree $18$ and degree $24$ relations it implies.  

\begin{lemma}
\begin{enumerate}
\item
In $H^{18}(R)$ we have the relation
\[
 t^9 - 3w^2t = 0.
\]
\item
In $H^{24}(R)$ we have the relation
\[
w^3 + 15w^2t^4 - 9wt^8 = 0.
\]
\end{enumerate}
\end{lemma}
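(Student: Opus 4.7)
The plan is to use Poincar\'e duality: as noted after Theorem \ref{mainthm:intcoh}, the pairing $\mu \colon H^p(R) \o H^{32-p}(R) \to \Z$ is non-degenerate, so an element of $H^{18}(R)$ or $H^{24}(R)$ vanishes iff it pairs to zero against a basis of $H^{14}(R)$ or $H^{8}(R)$, respectively. Theorem \ref{thm:prodsR} tells us exactly how to evaluate such pairings, so the proof reduces to identifying suitable bases and performing a short numerical check.

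First I would identify bases in low degree. By Corollary \ref{cor:jstarshreik}(i), $j^{*} \colon H^k(R) \to H^k(Q)$ is an isomorphism for $k \le 15$. Lemma \ref{lem:cohofQ}(ii) (together with Theorem \ref{thm:intcohQ}) shows that $s^{4}, v$ is a basis for $H^{8}(Q)$ and $s^{7}, s^{3}v$ is a basis for $H^{14}(Q)$. Since $j^{*}(t)=s$ and $j^{*}(w)=v$, it follows that $t^{4}, w$ is a $\Z$-basis of $H^{8}(R)$ and $t^{7}, t^{3}w$ is a $\Z$-basis of $H^{14}(R)$.

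For relation (i), it suffices to show that $t^9 - 3w^2 t$ pairs trivially with $t^7$ and with $t^3 w$. Using Theorem \ref{thm:prodsR},
\[
(t^{9} - 3 w^{2} t) \, t^{7} = t^{16} - 3 t^{8} w^{2} = 3 \cdot 26 - 3 \cdot 26 = 0,
\]
\[
(t^{9} - 3 w^{2} t) \, t^{3} w = t^{12} w - 3 t^{4} w^{3} = 3 \cdot 15 - 3 \cdot 15 = 0.
\]
For relation (ii), I pair $w^{3} + 15 w^{2} t^{4} - 9 w t^{8}$ with the basis elements $t^{4}$ and $w$ of $H^{8}(R)$:
\[
(w^{3} + 15 w^{2} t^{4} - 9 w t^{8}) \, t^{4} = t^{4}w^{3} + 15 t^{8} w^{2} - 9 t^{12} w = 15 + 15 \cdot 26 - 9 \cdot 45 = 0,
\]
\[
(w^{3} + 15 w^{2} t^{4} - 9 w t^{8}) \, w = w^{4} + 15 t^{4} w^{3} - 9 t^{8} w^{2} = 9 + 15 \cdot 15 - 9 \cdot 26 = 0.
\]
Non-degeneracy of $\mu$ then yields both relations.

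There is no real obstacle here: the work is purely bookkeeping once Theorem \ref{thm:prodsR} and Corollary \ref{cor:jstarshreik} are in hand. The only thing worth flagging is that one must check the pairings against a full $\Z$-basis (not just rationally), which is why the identification $H^{k}(R) \iso H^{k}(Q)$ for $k \le 15$ is essential -- it guarantees $t^{4}, w$ and $t^{7}, t^{3}w$ are genuinely $\Z$-bases and not merely $\Q$-bases.
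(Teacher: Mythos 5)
Your proof is correct. For part (ii) it coincides with the paper's own argument: the paper checks exactly the two products $t^4(w^3+15w^2t^4-9wt^8)$ and $w(w^3+15w^2t^4-9wt^8)$ and invokes Poincar\'e duality. For part (i), however, you take a genuinely different (though closely related) route. The paper does not use the product table at all for (i): it observes that $j^*(t^8-3w^2)=s^8-3v^2=0$ by the relation $\rho_{16}$ in $H^*(Q)$ (Theorem \ref{thm:intcohQ}), and then applies the projection formula $j_!(j^*(x))=xt$ to get $(t^8-3w^2)t=0$ directly. That argument is shorter, needs no arithmetic, and makes transparent that the degree~$18$ relation in $R$ is literally the degree~$16$ relation of $Q$ pushed forward along $j_!$. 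Your version instead verifies (i) by pairing against the basis $t^7, t^3w$ of $H^{14}(R)$ using Theorem \ref{thm:prodsR}; this is a uniform mechanical procedure that handles both parts the same way, at the cost of relying on the full intersection table. Your arithmetic checks out in all four pairings, and your care in justifying that $t^4,w$ and $t^7,t^3w$ are integral (not merely rational) bases --- via Corollary \ref{cor:jstarshreik}(i) and the absence of relations in $H^*(Q)$ below degree $16$ --- is exactly the point that makes the duality argument legitimate over $\Z$.
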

\begin{proof}
To prove (i) we argue as follows.  In $H^{16}(Q)$
$$
j^*(t^8 - 3w^2) = s^8 - 3v^2 = 0
$$
and therefore
$$
0 =j_! ((j^*(t^8 - 3w^2)) = (t^8 - 3w^2)t.
$$

To prove (ii) we simply check that
$$
t^4(w^3 + 15w^2t^4 - 9wt^8) = w(w^3 + 15w^2t^4 - 9wt^8) = 0
$$
and it follows that $w^3 + 15w^2t^4 - 9wt^8 = 0$ by Poincar\'e duality. 
\end{proof}

We now complete the proof of Theorem \ref{mainthm:intcoh} in the same way we completed the proof of Theorem \ref{thm:intcohQ}. We have constructed a surjective ring homomorphism
\[
\frac{\Z[t,w]}{(r_{16}, r_{24})} \to H^*(R).
\]
We know that $H^k(R)$ is a free abelian group and we know its rank. A counting argument shows that the rank of the homogeneous degree $k$ part of $\Z[t, w]/(r_{16}, r_{24})$ is free abelian and has the same rank as $H^k(R)$.  

\section{The characteristic classes of the natural bundles over $R$} \label{sec:chern}

\subsection{Bundles over $R$}
We start with the relevant representation theory.  As usual we write $\rho_{10}$ for the $10$-dimensional vector representation of $\Spin(10)$, and $\delta_{10}^{\pm}$ for the two $16$-dimensional spin representations. As in the introduction, we use $(\eps, i)$ for the relevant central subgroup $C_4 \subset \Spin(10) \times S^1$. Recall that $\eps$ acts as multiplication by $\pm i$ on $\delta_{10}^\pm$ and as multiplication by $-1$ on $\rho_{10}$. A necessary and sufficient condition for a representation of $\Spin(10) \times S^1$ to descend to $\Spin(10) \times_{C_4} S^1$ is that this central subgroup $C_4$ acts trivially.  

Let  $\xi$ be the usual representation of $S^1$ on $\C$.  Then
$$
\rho_{10}\o \xi^2, \quad \delta_{10}^+\o \xi^3, \quad \delta_{10}^-\o \xi^{-3}, \quad \xi^4
$$
descend to $\Spin(10) \times_{C_4} S^1$ and so they define complex vector bundles 
$$
V, \quad D, \quad \bar{D}, \quad L
$$
of dimensions $10, 16, 16, 1$ over $R_5$, respectively. From \cite[Section~2]{MR0440584} we have the following relation between bundles over $R$
$$
L^{-1} + V + D \otimes L^{-1} = R \times \C^{27}.
$$
We know the Chern classes of $L$ so once we know the Chern classes of $D$ we know the Chern classes of $\bar{D}$ and $V$.
We choose to compute the Chern classes of $D$ since it is straightforward to check from \cite[Chapter~6]{MR1428422} that $T_c(R) = D$. We choose to give these Chern classes in terms of the presentation of Theorem \ref{mainthm:intcoh} rather than the presentation in \cite[Theorem~8.2.1]{JRTI} since this leads to the most compact formulas. To do this, we must compare the two presentations.

\subsection{Comparing the two presentations of the cohomology of $R$} \label{sec:prescompare}
We work in the ring 
$$
H^*(R; \Q) = \frac{\Q[t,w]}{(r_{18}, r_{24})}
$$ 
as in Theorem \ref{mainthm:intcoh}.

In \cite[Theorem~8.2.1]{JRTI} we give a presentation of $H^*(R; \Q)$ with two generators  $a_2 \in H^2(R;\Q)$, $a_8 \in H^8(R; \Q)$ and two relations.  It is straightforward to check that $t = 4a_2$ and when we substitute $a_2 = t/4$ the two relations become
\begin{align}
0 & = ta_8^2 +\frac{27}{4}t^5a_8 -\frac{39}{64}t^9, \\
0 & = a_8^3 + \frac{369}{8} t^4a_8^2 - \frac{2997}{64}t^8a_8 + \frac{1539}{512} t^{12}. 
\end{align}
We must calculate $a_8$ as a polynomial in $t, w$.

\begin{theorem} \label{thm:prescompare}
The element
$$
a_8 = 6w - \frac{27}{8} t^4
$$
is the unique indecomposable element of the ring $H^*(R; \Q)$ satisfying relations (1) and (2).
\end{theorem}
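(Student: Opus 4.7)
My plan is to solve the equations (1) and (2) directly, using the new presentation of $H^*(R;\Q)$ from Theorem \ref{mainthm:intcoh}. The key observation is that $H^*(R;\Q)$ is generated as a $\Q$-algebra by $t \in H^2(R;\Q)$ and $w \in H^8(R;\Q)$, and below degree $8$ only powers of $t$ appear. Hence the decomposable elements of $H^8(R;\Q)$ are exactly the $\Q$-multiples of $t^4$, and any indecomposable class in $H^8(R;\Q)$ has the form
\[
a_8 = \alpha t^4 + \beta w, \qquad \alpha, \beta \in \Q, \ \beta \neq 0.
\]
So the task reduces to pinning down the pair $(\alpha, \beta)$ from relations (1) and (2).

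To exploit (1), which lives in $H^{18}(R;\Q)$, I would first use the relation $r_{18}$ to write $t^9 = 3 t w^2$, so that $\{\,t^5 w,\ t w^2\,\}$ is a basis for $H^{18}(R;\Q)$. Expanding $t a_8^2 + \tfrac{27}{4} t^5 a_8 - \tfrac{39}{64} t^9$ and reducing in this basis yields a linear combination whose two coefficients must vanish. The coefficient of $t^5 w$ factors as $\beta(2\alpha + \tfrac{27}{4})$, and since $\beta \neq 0$ this forces $\alpha = -\tfrac{27}{8}$. Substituting back into the coefficient of $t w^2$ then yields $\beta^2 = 36$, so $\beta = \pm 6$.

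To eliminate the sign ambiguity I would use relation (2), which lives in $H^{24}(R;\Q)$. Here I would use both $t^9 = 3 t w^2$ (hence $t^{12} = 3 t^4 w^2$) and $w^3 = 9 w t^8 - 15 w^2 t^4$ from $r_{24}$, so that $\{\,t^8 w,\ t^4 w^2\,\}$ is a basis for $H^{24}(R;\Q)$. With $\alpha = -\tfrac{27}{8}$ and $\beta^2 = 36$ already fixed, a careful expansion of $a_8^3 + \tfrac{369}{8} t^4 a_8^2 - \tfrac{2997}{64} t^8 a_8 + \tfrac{1539}{512} t^{12}$ in this basis shows that the coefficient of $t^8 w$ vanishes identically (giving no new information, consistent with the fact that (1) already determines the quadratic data), whereas the coefficient of $t^4 w^2$ is of the form $c_0 + c_1 \beta$ with $c_1 \neq 0$, so its vanishing determines $\beta = +6$ uniquely. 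Therefore $a_8 = 6 w - \tfrac{27}{8} t^4$.

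The substantive content is all in setting up the right bases and the correct reductions modulo $r_{18}$ and $r_{24}$; after that the computation is entirely mechanical. The main obstacle is therefore really just bookkeeping: making sure the reduction to the two-dimensional basis in each top-degree slot is carried out consistently, and that one checks both coefficients (not just one) in order to extract the two constraints on $(\alpha, \beta)$ from each relation.
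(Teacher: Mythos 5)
Your proposal is correct and follows essentially the same route as the paper: write the indecomposable class as $\alpha t^4+\beta w$ with $\beta\neq 0$, reduce relation (1) modulo $r_{18}$ in a rank-two basis of $H^{18}(R;\Q)$ to force $\alpha=-\tfrac{27}{8}$ and $\beta^2=36$, and then use relation (2) to single out $\beta=6$. The only differences are cosmetic (you use the basis $\{t^5w,\,tw^2\}$ where the paper uses $\{t^9,\,t^5w\}$, and you spell out the degree-$24$ check that the paper compresses into ``a final calculation''), and your claimed coefficients --- the vanishing of the $t^8w$ term and the linear-in-$\beta$ form $3240-540\beta$ of the $t^4w^2$ term --- do check out.
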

 
\begin{proof}
Let $x \in H^8(R; \Q)$ be such that
$$
x = \lambda t^4 + \mu w, \quad tx^2 + \alpha t^5x = \beta t^9 
$$
where $\alpha, \beta, \lambda, \mu \in \Q$ and $\mu \neq 0$.  The condition $\mu \neq 0$ ensures that $x$ is indecomposable.

We work with the basis $u = t^9$, $v = t^5w$ for $H^{18}(R; \Q)$. Calculating $tx^2 + \alpha t^5x = \beta t^9$ in this basis (using $r_{18}$), and comparing the coefficients of $u$ and $v$ gives two equations:  
\begin{align*}
2 \lambda \mu + \alpha \mu & = 0,\\
\lambda^2 + \frac{\mu^2}{3} +\alpha \lambda & = \beta.
\end{align*}
Since $\mu \neq 0$, we find that 
$$
\lambda = -\frac{\alpha}{2} \quad \text{ and } \quad \mu^2 = 3 \left( \beta +\frac{\alpha^2}{4}\right).
$$
Fixing $\alpha = 27/4$ and $\beta = 39/64$ leads to $\mu = \pm 6$ and $\lambda = -27/8$. Therefore
\[
a_8 = \pm6w - \frac{27}{8}t^4. 
\]
A final calculation shows that 
$$
a_8 = 6w - \frac{27}{8}t^4
$$ 
is the only solution to relation (2). 
\end{proof}

\subsection{Calculating characteristic classes}
In \cite[Section~8.2]{JRTI} we construct a homomorphism
$$
\phi : H^*(B\Spin(10) \times S^1; \Q) \to H^*(R; \Q)
$$
such that
$$
\phi(c_k( \delta_{10}^+\otimes \xi^3)) = c_k(T_c(R)).
$$ 
In Section~8.1 of \cite{JRTI} we explain how to calculate $\phi$ in terms of the standard choice of generators for the ring $H^*(B\Spin(10) \times S^1; \Q)$ and the generators $a_2$ and $a_8$ of $H^*(R; \Q)$ referred to in the previous section.  We complete the proof of Theorem \ref{mainthm:chernclasses} in three steps.
\begin{enumerate}
\item 
First compute $c_k(\delta_{10}^+\otimes \xi^3)$ using the splitting principle, see \cite[Section~8.6]{JRTI}.
\item
Next compute $\phi(c_k(\delta_{10}^+\otimes \xi^3))$ in terms of the generators $a_2, a_8$.
\item
Finally use Section \ref{sec:prescompare} to express $c_k(T_c(R))$ in terms of the generators $t, w$.
\end{enumerate}

\printbibliography
\end{document}